\documentclass[a4paper, 11pt, twoside, notitlepage]{amsart}

\usepackage{amsmath, amssymb, amsthm, amscd}
\usepackage{mathtools}
\usepackage{mathrsfs}

\usepackage{graphicx, xcolor}
\usepackage{tikz}

\usepackage[ocgcolorlinks, linkcolor=blue]{hyperref}
\usepackage{url}
\usepackage[shortlabels]{enumitem}
\usepackage{comment}
\usepackage{verbatim}

\usepackage{bm}
\usepackage{bbm}
\usepackage{dsfont}
\usepackage{relsize}
\usepackage{esint}
\usepackage[normalem]{ulem}

\usepackage[utf8]{inputenc}

\newcommand\redsout{\bgroup\markoverwith{\textcolor{red}{\rule[0.5ex]{2pt}{0.4pt}}}\ULon}

\numberwithin{equation}{section}

\renewcommand{\H}{{\mathbb H}}

\allowdisplaybreaks

\mathtoolsset{showonlyrefs}

\graphicspath{{images/}}

\newtheorem{theorem}{Theorem}[section]
\newtheorem{lemma}[theorem]{Lemma}

\newtheorem{proposition}[theorem]{Proposition}
\newtheorem{question}{Question}

\newtheorem{remark}[theorem]{Remark}

\title[Entanglement principle on hyperbolic spaces]{Entanglement principle for fractional Laplacian on hyperbolic spaces and applications to inverse problems}

\author[Y.-H. Lin]{Yi-Hsuan Lin}
\address{Department of Applied Mathematics, National Yang Ming Chiao Tung University, Hsinchu, Taiwan \& Fakult\"at f\"ur Mathematik, University of Duisburg-Essen, Essen, Germany}
\curraddr{}
\email{yihsuanlin3@gmail.com}

\keywords{Fractional Laplacian, hyperbolic space, entanglement principle, Calder\'on's problem, unique continuation, Runge approximation.}

\subjclass[2020]{35R30; 26A33; 35J70}

\newcommand{\C}{{\mathbb C}}
\newcommand{\R}{{\mathbb R}}
\newcommand{\Z}{{\mathbb Z}}
\newcommand{\N}{{\mathbb N}}

\newcommand{\Id}{\mathrm{Id}}

\newcommand{\LC}{\left(}
\newcommand{\RC}{\right)}
\newcommand{\wt}{\widetilde}

\newcommand{\norm}[1]{\lVert #1 \rVert}
\DeclareMathOperator{\supp}{supp} 

\begin{document}

	\maketitle
	\begin{abstract}
		We establish an entanglement principle for fractional powers of the Laplace--Beltrami operator on hyperbolic space \(\H^n\), \(n\ge 2\). More precisely, we prove that if finitely many distinct noninteger powers of \(-\Delta_{\H^n}\), acting on functions that vanish on a common nonempty open set, satisfy a linear dependence relation on that set, then each of these functions must vanish identically on \(\H^n\). This extends the recently developed entanglement principle for the fractional Laplacian on \(\R^n\) to the negatively curved setting of hyperbolic space. As an application, we derive global uniqueness results for inverse problems associated with fractional polyharmonic equations on \(\H^n\), including a fractional Calder\'on problem. The proof relies on the heat semigroup representation of fractional powers together with sharp global heat kernel estimates on hyperbolic space.
	\end{abstract}

	\tableofcontents

	\section{Introduction}\label{sec:introduction}
	
	Inverse problems for fractional differential equations have been investigated extensively in recent years in a variety of geometric and analytic settings. 
	A foundational result in this direction is the fractional Calder\'on problem for the fractional Schr\"odinger equation, first solved in \cite{GSU20}. 
	Due to the intrinsic nonlocality of the operator, the problem is naturally formulated as an exterior value problem. 
	More precisely, given $s\in (0,1)$, let $\Omega \subset\R^n$ be a bounded open set with Lipschitz boundary for $n\in \N$, and let $q\in L^\infty(\Omega)$. 
	Consider the exterior value problem 
	\begin{equation}\label{eq: fractional Schro}
		\begin{cases}
			\LC (-\Delta)^s +q \RC u=0 &\text{ in }\Omega, \\
			u=f &\text{ in }\R^n\setminus \overline{\Omega}.
		\end{cases}
	\end{equation}
	When $0$ is not a Dirichlet eigenvalue of $(-\Delta)^s+q$ in $\Omega$, there exists a unique solution $u\in H^s(\R^n)$ to \eqref{eq: fractional Schro} for any given exterior Dirichlet data $f$, for instance $f\in C^\infty_c(\Omega_e)$ or $f\in H^s(\Omega_e)$. 
	In this case, one can define the exterior (partial) \emph{Dirichlet-to-Neumann} (DN) map by
	\begin{equation}\label{nonlocal DN}
		\Lambda_q^{s} : f\mapsto  \left. (-\Delta)^s u_f \right|_{W_2},
	\end{equation}
	where $W_1, W_2\Subset \Omega_e$ are any nonempty open subsets.
	
	In the seminal work \cite{GSU20}, it was shown that the DN map \eqref{nonlocal DN} uniquely determines the bounded potential $q$ in $\Omega$ for all dimensions $n\in \N$. 
	This result highlights a remarkable rigidity phenomenon: in contrast to the classical local case $s=1$, where global uniqueness may depend sensitively on the dimension ($n=2$ and $n\geq 3$), and additional assumptions, the nonlocal operator exhibits stronger global determination properties.
	
	Two fundamental ingredients underlying these results are the \emph{unique continuation property} (UCP) and the \emph{Runge approximation property}. 
	Loosely speaking, given a nonempty open subset $\mathcal{O}\subset \R^n$, the UCP for $(-\Delta)^s$ ($0<s<1$) asserts that
	\[
	u=(-\Delta)^su =0 \text{ in }\mathcal{O}
	\implies
	u\equiv 0 \text{ in } \R^n.
	\]
	Moreover, the Runge approximation property states that any $L^2$ function can be approximated by solutions of the fractional Schr\"odinger equation. 
	These two features are substantially stronger than in the local case and play a decisive role in fractional inverse problems.
	
	Based on these properties, fractional inverse problems have attracted rapidly growing attention in the past decade. 
	In \cite{cekic2020calderon}, it was shown that both first-order and zeroth-order coefficients can be uniquely determined from the DN map, a phenomenon that generally fails in the local case without additional structural assumptions. 
	A simultaneous recovery of an obstacle and a surrounding coefficient was investigated in \cite{CLL2017simultaneously}, and the determination of bounded potentials for anisotropic fractional Schr\"odinger equations was established in \cite{GLX}. 
	Notably, these problems remain open in the classical case $s=1$ for $n\geq 3$, further illustrating the structural advantages induced by nonlocality.
	
	Beyond uniqueness, quantitative aspects have also been studied. 
	Monotonicity-based if-and-only-if methods were developed for both linear and semilinear fractional equations in \cite{harrach2017nonlocal-monotonicity,harrach2020monotonicity, lin2020monotonicity}. 
	Global uniqueness from a single measurement and measurable UCP were proved in \cite{GRSU20}. 
	Optimal and logarithmic stability results were obtained in \cite{ruland2018exponential, RS20, KLW2021calder}. 
	We also refer the reader to
	\cite{LL2022inverse, LL2020inverse,CMRU20,LLR2019calder,LZ2023unique,GU2021calder,LLU2022para} for further developments and references.
	All these works focus on recovering lower-order perturbations of nonlocal operators and demonstrate that nonlocality enhances both rigidity and approximation properties compared to the local setting.
	
	More recently, interior determination (leading coefficient determination) problems have been studied using alternative techniques. 
	One approach relies on the Caffarelli-Silvestre (CS) extension (see, e.g., \cite{CGRU2023reduction,ruland2023revisiting, LLU2023calder, LZ2024approximation}), which relates the nonlocal and local equations via the Caffarelli-Silvestre extension.
	Another approach is based on the heat semigroup representation of fractional operators (see, e.g., \cite{feizmohammadi2021fractional, Fei24_TAMS, FKU24, lin2024fractional}). 
	The heat semigroup method applies uniformly to general fractional powers and to operators on Riemannian manifolds. 
	In particular, while the CS extension is well understood for single fractional powers, no comparable extension is available for general fractional polyharmonic operators. 
	In contrast, the heat semigroup formulation provides a flexible framework for handling mixed fractional powers.
	
	A particularly intriguing development in this direction is the introduction of an \emph{entanglement principle} for nonlocal operators. 
	Roughly speaking, this principle states that if several distinct fractional powers of an operator, acting on functions that vanish on a common nonempty open set, are linearly dependent on that set, then each function must vanish identically. 
	Such a phenomenon was first established for nonlocal elliptic operators on closed Riemannian manifolds in \cite{FKU24} (compact case), and later for the fractional Laplacian on $\R^n$ in \cite{FL24} (noncompact case). 
	More recently, \cite{LLY25_entangle} extended the entanglement principle to nonlocal parabolic operators on a noncompact Euclidean domain. 
	This mechanism provides a powerful tool for decoupling mixed fractional powers and has already led to new results in inverse problems. 
	Finally, the recent monograph \cite{LL25_Integro} investigates comprehensive studies for spatial fractional inverse problems.

	\subsection{Mathematical models}
	
	In this paper, we investigate inverse problems associated with the fractional Laplacian on hyperbolic spaces. 
	Several equivalent models of the $n$-dimensional hyperbolic space $\H^n$ appear in the literature. 
	Throughout this work, we adopt the hyperboloid model.
	
	More precisely, let $\R^{n+1}$ be equipped with the Lorentzian metric 
	\[
	-dx_0^2 + dx_1^2 + \cdots + dx_n^2,
	\]
	and denote by $[\cdot,\cdot]$ the associated Lorentzian inner product
	\begin{equation}\label{eq: Lorentzian metric}
		[x,y] = x_0y_0 - x_1y_1 - \cdots - x_ny_n,
	\end{equation}
	for $x=(x_0,\ldots,x_n)$ and $y=(y_0,\ldots,y_n)\in \R^{n+1}$. 
	
	The real hyperbolic space $\H^n$ is defined as the upper sheet of the two-sheeted hyperboloid
	\begin{equation}
		\begin{split}
			\H^n 
			&= \left\{ (x_0,\ldots,x_n)\in \R^{n+1} : \, x_0^2 - x_1^2 - \cdots - x_n^2 = 1, \ x_0>0 \right\} \\
			&= \left\{ (\cosh r, \sinh r\, \omega) : \, r \ge 0, \ \omega \in \mathbb S^{n-1} \right\}.
		\end{split}
	\end{equation}
	Equipped with the induced Riemannian metric, $\H^n$ is a complete, simply connected manifold of constant sectional curvature $-1$, and it is the simplest example of a rank-one symmetric space.
	
	In geodesic polar coordinates $(r,\omega)$ centered at the point $e_0=(1,0,\ldots,0)$, the metric takes the form
	\begin{equation}\label{eq: metric of hyperbolic}
		g_{\H^n} = dr^2 + \sinh^2 r \, d\omega^2,
	\end{equation}
	where $d\omega^2$ denotes the standard metric on the sphere $\mathbb S^{n-1}$. 
	The associated Riemannian distance between $x,y\in \H^n$ will be denoted by $d_{\H^n}(x,y)$.
	
	With respect to these coordinates, the Laplace-Beltrami operator is given by
	\begin{equation}
		\Delta_{\H^n}= \partial_{rr}+ (n-1)\frac{\cosh r}{\sinh r}\partial_r + \frac{1}{\sinh^2 r}\Delta_{\mathbb S^{n-1}},
	\end{equation}
	and the corresponding volume element reads
	\begin{equation}\label{eq: volume in polar}
		dV(x) = \sinh^{n-1} r \, dr \, d\omega,
	\end{equation}
	where $d\omega$ denotes the surface measure on $\mathbb S^{n-1}$.

	\subsection{Entanglement principle for the fractional Laplace operator}
	
	We investigate whether a unique continuation phenomenon holds for mixed fractional Laplace operators on $\H^n$. 
	More precisely, let $N\in \N$ and let $\mathcal O\subset \H^n$ be a nonempty open set. 
	Suppose that $u\in H^{r}(\H^n)$ for some $r\in \R$ and that
	\begin{equation}\label{eq: UCP_poly}
		u|_{\mathcal O}= \sum_{k=1}^N b_k \big( (-\Delta_{\H^n})^{s_k} u \big)\Big|_{\mathcal O} =0,
	\end{equation}
	for coefficients $\{b_k\}_{k=1}^N \subset \C\setminus \{0\}$ and exponents 
	$\{s_k\}\subset (0,\infty)\setminus \N$. 
	Does it follow that $u\equiv 0$ in $\H^n$?
	
	To the best of our knowledge, results at this level of generality are not available even in the case $N=1$ on hyperbolic space.
	In the Euclidean setting, unique continuation for a single fractional Laplacian is often proved using the CS extension \cite{CS07}, which realizes $(-\Delta)^s$ as a Dirichlet-to-Neumann relation for a degenerate elliptic equation. 
	This approach has been instrumental in establishing UCP results for fractional operators on $\R^n$ (see, e.g., \cite{ruland2015unique, GSU20}). 
	However, no comparable extension is known for general fractional polyharmonic operators, neither on $\R^n$ nor on $\H^n$. We also refer the readers to \cite{BGS_15} for the CS extension on hyperbolic spaces.
	
	Instead, we adopt the heat semigroup characterization of fractional powers, which provides a flexible framework on general Riemannian manifolds and is well-suited to mixed fractional orders.
	This viewpoint will play a central role in our analysis.
	
	We will show that \eqref{eq: UCP_poly} is a special case of a more general rigidity phenomenon, which we refer to as the \emph{entanglement principle} for fractional Laplace operators on $\H^n$.
	
	\begin{question}\label{question}
		Let $N\in \N$, let $\{s_k\}_{k=1}^N\subset (0,\infty)\setminus \N$, and let $\mathcal{O}\subset \H^n$ be a nonempty open set. 
		Suppose $\{u_k\}_{k=1}^N\subset H^r(\H^n)$\footnote{The fractional Sobolev space on $\H^n$ will be introduced in Section \ref{sec:prel}.} 
		for some $r\in \R$, and assume that 
		\begin{equation}\label{ent_u_cond}
			u_1|_{\mathcal O}=\ldots=u_N|_{\mathcal O}=0 
			\quad \text{and} \quad  
			\sum_{k=1}^N  b_k(-\Delta_{\H^n})^{s_k}u_k\Big|_{\mathcal O}=0,
		\end{equation}
		for some $\{b_k\}_{k=1}^N\subset \C\setminus \{0\}$. 
		Does it follow that $u_k\equiv 0$ in $\H^n$ for all $k=1,\ldots,N$?
	\end{question}

	When $N=1$, this reduces to the classical unique continuation problem for the fractional Laplacian on $\H^n$, which is not yet understood in full generality.
	For $N\ge 2$, the statement is substantially stronger, as it involves several functions whose fractional powers are linearly dependent on an open set. 
	It is important to note that Question~\ref{question} cannot hold without additional restrictions on the exponents. 
	Indeed, consider $N=2$, let $s_1=s\in (0,1)$ and $s_2=s_1+m$ for some $m\in \N$, and choose a nontrivial function $u\in C^\infty_c(\H^n\setminus \overline{\mathcal{O}})$. 
	Setting $u_1 = (-\Delta_{\H^n})^m u$ and $u_2=u$, we obtain
	\[
	(-\Delta_{\H^n})^{s_1}u_1 -(-\Delta_{\H^n})^{s_2}u_2=0
	\quad \text{in } \H^n,
	\]
	which shows that resonance between exponents may cause the entanglement principle to fail.
	
	To exclude such resonance phenomena, we impose the following nondegeneracy condition.
	
	\begin{enumerate}[\textbf{(H)}]
		\item\label{exponent condition}
		The exponents satisfy $s_1<s_2<\ldots <s_N$ and 
		$s_k-s_j \notin \Z$ for all $j\neq k$.
	\end{enumerate}
	
	This assumption ensures that the fractional powers remain genuinely distinct and cannot be reduced to one another by integer shifts.
	
	We can now state the main entanglement result.
	
	\begin{theorem}[Entanglement principle]\label{thm: ent}
		Let \(O\subset \H^n\) be a nonempty open set with \(n\ge 2\). Let
		\(N\in \N\) and \(\{s_k\}_{k=1}^N\subset (0,\infty)\setminus \N\) satisfy Assumption \((H)\).
		Assume that \(\{u_k\}_{k=1}^N \subset H^{-r}(\H^n)\) for some \(r\ge 0\), and let
		\[
		\rho_\gamma(x):=e^{-\gamma\sqrt{1+d_{\H^n}(e_0,x)^2}}, \qquad x\in \H^n,
		\]
		for some \(\gamma>\frac{n-1}{2}\).
		Suppose that for each \(k=1,\dots,N\),
		\begin{equation}\label{eq: exponential decay condition weakly}
			|\langle u_k,\phi\rangle|
			\le C\,\|\rho_\gamma \phi\|_{H^r(\H^n)}
			\qquad \text{for all }\phi\in C_c^\infty(\H^n).
		\end{equation}
		If
		\[
		u_1|_O=\cdots=u_N|_O=0
		\qquad\text{and}\qquad
		\left.\left(\sum_{k=1}^N b_k(-\Delta_{\H^n})^{s_k}u_k\right)\right|_O=0,
		\]
		for some \(\{b_k\}_{k=1}^N\subset \C\setminus\{0\}\), then
		\[
		u_k\equiv 0 \qquad \text{in }\H^n,\quad k=1,\dots,N.
		\]
	\end{theorem}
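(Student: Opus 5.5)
We follow the heat semigroup strategy of \cite{FKU24, FL24}, adapted to $\H^n$ through global heat kernel bounds. Write $P_t=e^{t\Delta_{\H^n}}$ with kernel $p_t(x,y)$. For $s\in(0,\infty)\setminus\N$ with $m=\lceil s\rceil$, we use the Balakrishnan-type formula
\[
(-\Delta_{\H^n})^{s}u=\frac{1}{\Gamma(-s)}\int_0^\infty \Big(P_tu-\sum_{j=0}^{m-1}\frac{t^j}{j!}\Delta_{\H^n}^j u\Big)\frac{dt}{t^{1+s}} .
\]
Because every $u_k$ and all its integer powers $\Delta_{\H^n}^j u_k$ vanish on $O$, for $x\in O$ the polynomial correction terms drop out. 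Hence, with $U_k(t,x):=(P_tu_k)(x)$ for $x\in O$, the hypothesis reads
\[
\sum_{k=1}^N \frac{b_k}{\Gamma(-s_k)}\int_0^\infty U_k(t,x)\,t^{-1-s_k}\,dt=0,\qquad x\in O.
\]
We apply $\Delta_{\H^n}^\ell$ in $x$ for every $\ell\in\N$. Since $\Delta P_t=\partial_tP_t$, integration by parts in $t$ gives
\[
\sum_k c_k\,(s_k+1)\cdots(s_k+\ell)\int_0^\infty U_k(t,x)\,t^{-1-s_k-\ell}\,dt=0 .
\]
The boundary terms vanish. At $t\to0$ this holds because $u_k=0$ near $x$ and $P_t$ has the off-diagonal bound $p_t(x,y)\lesssim t^{-n/2}e^{-d^2/(4t)}$ for $d\ge \delta$, which makes $U_k(t,x)=O(t^\infty)$. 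At $t\to\infty$ it holds by the large-time decay $\|P_t\|\lesssim e^{-(n-1)^2t/4}$, so $U_k$ decays exponentially. We must justify the pairings for distributions in $H^{-r}$ satisfying \eqref{eq: exponential decay condition weakly}. For this we invoke the sharp Davies--Mandouvalos bound
\[
p_t(x,y)\asymp t^{-n/2}(1+d+t)^{\frac{n-3}{2}}(1+d)\,e^{-\frac{(n-1)^2}{4}t-\frac{n-1}{2}d-\frac{d^2}{4t}},
\]
together with derivative bounds. Then $\rho_\gamma^{-1}p_t(x,\cdot)$ lies in $H^r$ with controlled norms precisely because $\gamma>\frac{n-1}{2}$ beats the $e^{-\frac{n-1}{2}d}$ tail. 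This is the step where the decay hypothesis enters.

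Next we substitute $\tau=1/t$. The identities become vanishing of all moments $\int_0^\infty \tau^{\ell} F(\tau)\,d\tau$ of a combination involving $\tau^{s_k}$. More cleanly, we define
\[
\Phi_k(z)=\int_0^\infty U_k(t,x)\,t^{-1-z}\,dt ,
\]
a Mellin-type transform. It is entire in $z$: small $t$ is harmless because $U_k=O(t^\infty)$, and large $t$ because of the exponential decay. The relations say that $G(\ell):=\sum_k b_k\Phi_k(s_k+\ell)/\Gamma(-s_k-\ell)=0$ for all $\ell\in\N$. The factor comes from $(s+1)\cdots(s+\ell)/\Gamma(-s)$, which equals $(-1)^\ell/\Gamma(-s-\ell)$.

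The main obstacle is decoupling these relations to conclude $\Phi_k\equiv0$ for each $k$. Following \cite{FL24}, we first control the growth of $\Phi_k$. The $t\to0$ behaviour, governed by $e^{-\delta^2/(4t)}$, gives $|\Phi_k(z)|\lesssim \Gamma(\operatorname{Re} z)\,C^{\operatorname{Re}z}\delta^{-2\operatorname{Re} z}$. Combined with $1/\Gamma(-z)$, this shows that the entire function $H(z)=\sum_k b_k\Phi_k(z+s_k)/\Gamma(-z-s_k)$ has exponential type less than $\pi$ in a right half plane. Carlson's theorem then forces $H\equiv0$. Next we use that $1/\Gamma(-z-s_k)$ has zeros at $z\in -s_k+\N_0$, and these sets are pairwise disjoint modulo $\Z$ by (H). Evaluating at those points, together with the distinct asymptotic behaviour of $\Gamma(-z-s_k)$ along vertical lines (or equivalently comparing the oscillating factors $\sin\pi(z+s_k)$), we separate the summands and obtain $\Phi_k\equiv 0$ for each $k$.

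Finally, $\Phi_k\equiv0$ says that the Mellin transform of $t\mapsto U_k(t,x)$ vanishes, so $P_tu_k(x)=0$ for all $t>0$ and $x\in O$. By spectral calculus, $P_tu_k$ is real-analytic in $t$ and extends holomorphically to the right half plane. Passing to the wave/resolvent representation, or using that $P_t u_k$ and $u_k$ vanish on $O$ for all $t$, we get $(-\Delta_{\H^n}-\lambda)^{-1}u_k=0$ on $O$. Combined with unique continuation for elliptic equations on the connected manifold $\H^n$ and Runge-type density, as in \cite{FKU24}, this yields $u_k\equiv0$.
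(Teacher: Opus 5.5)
Your reduction to the moment identities is sound and parallels the paper. You work directly with $u_k\in H^{-r}$ and the higher-order Balakrishnan formula, whereas the paper mollifies and reduces to exponents in $(0,1)$; either route is fine. The gap is in the decoupling step, which you yourself flag as the main obstacle.

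First, the growth claim is wrong. By reflection, $1/\Gamma(-z-s_k)=-\pi^{-1}\Gamma(1+z+s_k)\sin\pi(z+s_k)$, which grows factorially as $\operatorname{Re}z\to+\infty$. So it compounds, rather than compensates, the (essentially sharp) bound $|\Phi_k(z)|\lesssim\Gamma(\operatorname{Re}z)c^{-\operatorname{Re}z}$. What you actually get for $H$ is a bound of order $\Gamma(\operatorname{Re}z)^2C^{\operatorname{Re}z}$, which is not of exponential type, so Carlson's theorem does not apply.

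Second, even a correct growth bound would not give $H\equiv0$. Write $\sin\pi(z+s_k)=\cos(\pi s_k)\sin\pi z+\sin(\pi s_k)\cos\pi z$. Then $H(\ell)=(-1)^\ell B(\ell)$, where $B(z):=\sum_k c_k\Gamma(1+z+s_k)\Phi_k(z+s_k)$ and $c_k=-b_k\sin(\pi s_k)/\pi$. The complementary combination enters $H$ multiplied by $\sin\pi z$, so the integer data carry no information about it. At best a Carlson-type argument gives $B\equiv0$, which is a single scalar identity for $N$ unknown functions. Your separation devices do not split it. At $z\in-s_j+\N_0$ only one summand drops out, and only on a discrete set. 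The factors $\sin\pi(z+s_k)$ all lie in the two-dimensional span of $\sin\pi z$ and $\cos\pi z$, so comparing them yields at most two relations, not $N$.

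The missing idea is the mechanism behind \cite[Proposition 3.1]{FKU24}, which the paper invokes after reducing to exponents in $(0,1)$.
\begin{enumerate}[(i)]
\item Insert $\Gamma(1+s_k+\ell)=\int_0^\infty e^{-\sigma}\sigma^{s_k+\ell}\,d\sigma$ and substitute $\sigma=\mu t$. Your relations become $\int_0^\infty\mu^{\ell}G(\mu)\,d\mu=0$ for all $\ell\in\N$, where $G(\mu)=\sum_k c_k\mu^{s_k}R_k(\mu)$ and $R_k(\mu)=\int_0^\infty e^{-\mu t}U_k(t,x)\,dt=\big((\mu-\Delta_{\H^n})^{-1}u_k\big)(x)$.
\item The off-diagonal decay $e^{-c/t}$ gives $|R_k(\mu)|\lesssim e^{-2\sqrt{c\mu}}$. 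This suffices to conclude $G\equiv0$ on $(0,\infty)$: for instance, the Fourier transform of $y\mapsto|y|G(y^2)$ is analytic in a strip and has vanishing Taylor coefficients of positive order at $0$.
\item The spectral gap makes each $R_k$ holomorphic in $\operatorname{Re}\mu>-\frac{(n-1)^2}{4}$. Continuing $G\equiv0$ around $\mu=0$ multiplies the $k$-th term by $e^{2\pi i s_k}$, and these numbers are pairwise distinct exactly by (H).
\item A Vandermonde argument then gives $R_k\equiv0$, hence $U_k(\cdot,x)\equiv0$.
\end{enumerate}

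Your final step is also not a proof as written. Vanishing of $(\lambda-\Delta_{\H^n})^{-1}u_k$ on $O$ does not feed into elliptic unique continuation, because $(\lambda-\Delta_{\H^n})w=u_k$ is inhomogeneous off $O$. The eigenfunction argument of \cite{FKU24} also relies on discrete spectrum, which $\H^n$ lacks. The paper instead applies parabolic unique continuation \cite{LinFH_UCP_para} to $U_k$, which vanishes on $\omega\times(0,\infty)$, and then lets $t\to0^+$. A wave-equation route via transmutation and Holmgren--John uniqueness could also work, but you do not carry it out.

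A minor point: the decay hypothesis controls pairings against $\phi$ with $\rho_\gamma\phi\in H^r$. The relevant object is therefore $\rho_\gamma p_t(x,\cdot)$, not $\rho_\gamma^{-1}p_t(x,\cdot)$, and the remark that $\gamma>\frac{n-1}{2}$ beats the $e^{-\frac{n-1}{2}d}$ tail points the wrong way.
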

	
	To the best of our knowledge, this is the first entanglement principle established on a noncompact negatively curved manifold, extending previous Euclidean and compact manifold results to the hyperbolic setting.
	We note that the decay condition \eqref{eq: exponential decay condition weakly} is automatically satisfied for functions supported in a bounded subset of $\H^n$ (see Lemma~\ref{lem:compact_support_decay}).		
	The exponential decay assumption \eqref{eq: exponential decay condition weakly} guarantees sufficient spatial decay to justify repeated integration by parts in the heat semigroup representation (see Section \ref{sec:entangle}). 
	Here $H^r(\H^n)$ denotes the fractional Sobolev space introduced in Section~\ref{sec:prel}, and $\langle\cdot,\cdot\rangle$ is the duality pairing between fractional Sobolev spaces $H^{-r}$ and $H^{r}$.
	
	\subsection{An application: the fractional Calder\'on problem}
	
	We next apply the entanglement principle to an inverse problem for fractional polyharmonic equations on $\H^n$. 
	Let $\{s_k\}_{k=1}^N \subset (0,\infty)\setminus \N$ satisfy Assumption~\ref{exponent condition}, 
	let $\Omega\subset \H^n$ be a bounded open set with Lipschitz boundary $\partial\Omega$, 
	and let $q\in L^\infty(\Omega)$. 
	Define the fractional polyharmonic operator
	\begin{equation}\label{eq: fractional poly op.}
		\mathcal{P}_{\H^n}:= \sum_{k=1}^N b_k (-\Delta_{\H^n})^{s_k},
	\end{equation}
	where $b_k>0$ for $k=1,\ldots, N$. The positivity of $\{b_k\}_{k=1}^N$ is used to guarantee the well-posedness of the following exterior value problem
	\begin{equation}\label{eq: main}
		\begin{cases}
			\big( \mathcal{P}_{\H^n}+q \big) u=0 &\text{ in }\Omega , \\
			u=f&\text{ in }\Omega_e,
		\end{cases}
	\end{equation}
	where $\Omega_e :=\H^n \setminus \overline{\Omega}$.
	To ensure well-posedness, we assume
	\begin{equation}\label{eq: eigenvalue condition}
		0 \text{ is not a Dirichlet eigenvalue of \eqref{eq: main}.}
	\end{equation}
	When $N=1$, this reduces to the standard fractional Schr\"odinger equation on $\Omega\subset \H^n$.
	
	Under this condition, one can define the partial DN map
	\[
	\Lambda_q : \wt H^{s_N}(W_1)\to H^{-s_N}(W_2),
	\qquad 
	f\mapsto \mathcal{P}_{\H^n} u_f \big|_{W_2},
	\]
	where $u_f$ solves \eqref{eq: main} and $W_1,W_2\Subset \Omega_e$ are nonempty open sets.
	Then we can prove the following global uniqueness result.
	
	\begin{theorem}\label{thm: main}
		Let $\Omega\subset \H^n$ be a bounded open set with Lipschitz boundary and $n\ge 2$. 
		Let $q_j\in L^\infty(\Omega)$ for $j=1,2$, and assume \eqref{eq: eigenvalue condition} holds for each $q_j$. 
		If the corresponding DN maps satisfy
		\begin{equation}\label{eq: same DN map}
			\Lambda_{q_1}f \big|_{W_2}=\Lambda_{q_2}f  \big|_{W_2}
			\quad \text{for all }f\in C^\infty_c(W_1),
		\end{equation}
		then $q_1=q_2$ in $\Omega$.
	\end{theorem}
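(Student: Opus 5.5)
The plan follows the standard route of \cite{GSU20}: an Alessandrini-type integral identity, plus a Runge approximation property, which itself comes from the unique continuation property given by Theorem~\ref{thm: ent} with $N$ copies of the same function.

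\textbf{Step 1: well-posedness and the integral identity.} Take $b_k>0$ and $q\in L^\infty(\Omega)$ satisfying \eqref{eq: eigenvalue condition}. The bilinear form
\[
B_q(u,v)=\sum_k b_k\int_{\H^n}(-\Delta_{\H^n})^{s_k/2}u\,(-\Delta_{\H^n})^{s_k/2}v\,dV+\int_\Omega quv
\]
is bounded on $H^{s_N}(\H^n)$. Coercivity of the principal part on $\wt H^{s_N}(\Omega)$ up to a compact perturbation follows from the spectral gap $\sigma(-\Delta_{\H^n})\subset[\frac{(n-1)^2}{4},\infty)$ and Rellich compactness on the bounded set $\Omega$. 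Fredholm theory then gives a unique $u_f\in H^{s_N}(\H^n)$ for every $f\in C^\infty_c(W_1)$. Using the symmetry of $B_q$, I would derive
\[
\langle(\Lambda_{q_1}-\Lambda_{q_2})f_1,f_2\rangle=\int_\Omega(q_1-q_2)\,u_1\,u_2\,dV,
\]
where $u_j$ solves \eqref{eq: main} with potential $q_j$ and exterior data $f_j\in C^\infty_c(W_j)$. By \eqref{eq: same DN map} the left-hand side vanishes.

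\textbf{Step 2: Runge approximation.} I will show that $\mathcal R=\{u_f|_\Omega: f\in C^\infty_c(W)\}$ is dense in $L^2(\Omega)$ for any nonempty open $W\Subset\Omega_e$. By Hahn--Banach, take $v\in L^2(\Omega)$ orthogonal to $\mathcal R$. Let $\varphi\in\wt H^{s_N}(\Omega)$ solve the adjoint problem $(\mathcal P_{\H^n}+q)\varphi=v$ in $\Omega$, $\varphi=0$ in $\Omega_e$. The orthogonality then gives $\langle\mathcal P_{\H^n}\varphi,f\rangle=0$ for all $f\in C^\infty_c(W)$, so
\[
\varphi|_W=0,\qquad \sum_k b_k(-\Delta_{\H^n})^{s_k}\varphi\Big|_W=0.
\]
Now apply Theorem~\ref{thm: ent} with $u_k=\varphi$ for all $k$ and $O=W$; Assumption (H) holds by hypothesis. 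The function $\varphi$ lies in $H^{s_N}\subset H^{-r}$ for $r=0$ and is supported in the bounded set $\overline\Omega$, so Lemma~\ref{lem:compact_support_decay} gives the decay condition \eqref{eq: exponential decay condition weakly}. Hence $\varphi\equiv0$, so $v=0$.

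\textbf{Step 3: conclusion.} Given $\psi_1,\psi_2\in L^2(\Omega)$, approximate them in $L^2(\Omega)$ by $u_1^{(m)}|_\Omega$ and $u_2^{(m)}|_\Omega$ with exterior data in $W_1$ and $W_2$, using Step 2 for $q_1$ and $q_2$ respectively. Since $q_1-q_2\in L^\infty$, the identity passes to the limit and yields $\int_\Omega(q_1-q_2)\psi_1\psi_2=0$. Choosing $\psi_1=1$ and $\psi_2=q_1-q_2$ gives $q_1=q_2$.

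\textbf{Main obstacle.} I expect the delicate point to be Step 1: checking the coercivity and Fredholm setup for mixed powers on noncompact $\H^n$. The Runge step, which is conceptually the key one, reduces directly to Theorem~\ref{thm: ent}.
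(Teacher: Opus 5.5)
Your proposal is correct and follows the paper's route: the Alessandrini-type identity of Lemma~\ref{lem:integral_identity_hyp}, Runge approximation via the adjoint problem and Theorem~\ref{thm: ent} (applied with all $u_k$ equal, with the decay hypothesis supplied by Lemma~\ref{lem:compact_support_decay}), and a limiting argument in the identity. The differences are cosmetic: you get coercivity of the principal part from the spectral gap rather than from a $\mu$-shift, and you test with $\psi_1=1$, $\psi_2=q_1-q_2$ (which works since the $q_j$ are real-valued, as the paper's use of $q_-$ implicitly assumes) rather than with $1$ and an arbitrary $\varphi\in L^2(\Omega)$.
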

	
	The key new ingredient, compared to the Euclidean case, is the entanglement principle on $\H^n$, which replaces the role of classical unique continuation in decoupling mixed fractional powers. A similar result in $\R^n$ has been studied in \cite{FL24}.
	
	\begin{remark}
		As a corollary of Theorem~\ref{thm: main}, if $s\in (0,\infty)\setminus \N$, then the potential in the fractional Schr\"odinger equation
		\[
		\begin{cases}
			\big( (-\Delta_{\H^n})^s +q \big) u =0 &\text{ in }\Omega, \\
			u=f &\text{ in }\Omega_e,
		\end{cases}
		\]
		is uniquely determined by its partial DN map.
	\end{remark}
	
	\noindent\textbf{Organization of the paper.}
	In Section~\ref{sec:prel}, we introduce the fractional Laplacian on $\H^n$ via the Helgason--Fourier transform and the heat semigroup representation, and recall the associated fractional Sobolev spaces. 
	We also establish well-posedness of the exterior Dirichlet problem and define the DN map. 
	Section~\ref{sec:entangle} is devoted to the proof of the entanglement principle, which relies on quantitative heat kernel estimates and a decoupling argument for mixed fractional powers. 
	Finally, in Section~\ref{sec:IP}, we combine the entanglement principle with a Runge approximation argument to prove the global uniqueness result for the fractional Calder\'on problem.

	\section{Preliminaries}\label{sec:prel}
	
	In this section, we collect several basic definitions and properties that will be used throughout the paper.
	
	\subsubsection*{Notation}
	Throughout the paper, given nonnegative quantities $A$ and $B$, we write
	$A \lesssim B$ if there exists a constant $C>0$ (independent of the relevant variables and functions under consideration) such that
	$A \le C\,B$.
	We write $A \gtrsim B$ if $B \lesssim A$, and $A \simeq B$ if both $A \lesssim B$ and $A \gtrsim B$ hold.
	We also write $A \sim B$ to indicate two-sided comparability in the same sense.
	The implicit constants may change from line to line.
	
	\subsection{The fractional Laplacian on hyperbolic spaces}
	
	We briefly review several characterizations of the fractional Laplacian on $\H^n$.
	
	\subsubsection{The Helgason--Fourier transform}
	
	Following \cite{BGS_15}, for $s\in (0,1)$ the fractional Laplacian $(-\Delta_{\H^n})^{s}$ on $\H^n$ can be defined via the Helgason--Fourier transform. 
	We begin by recalling the Euclidean definition: for $u\in \mathcal{S}(\R^n)$,
	\begin{equation}\label{eq: def of fractional Laplacian via Fourier}
		(-\Delta)^s u = \mathcal{F}^{-1}\big( |\xi|^{2s}\widehat{u}\big),
	\end{equation}
	where
	\begin{equation}\label{eq: Fourier transform}
		\widehat{u}(\xi)=\mathcal{F}(u)(\xi):=\int_{\R^n}u(x)e^{-\mathsf{i}x\cdot \xi}\, dx,
	\end{equation}
	and $\mathcal{F}^{-1}$ denotes the inverse Fourier transform ($\mathsf{i}=\sqrt{-1}$). 
	Note that the plane waves $e^{-\mathsf{i}x\cdot \xi}$ are generalized eigenfunctions of $\Delta$ with eigenvalue $-|\xi|^2$.
	
	Similarly, on $\H^n$ one considers generalized eigenfunctions of the Laplace--Beltrami operator,
	\begin{equation}
		h_{\lambda,\theta}(x)=[x,(1,\theta)]^{\mathsf{i}\lambda-(n-1)/2}, 
		\qquad x\in \H^n,
	\end{equation}
	where $[\cdot,\cdot]$ is the Lorentzian inner product in \eqref{eq: Lorentzian metric}, $\lambda\in \R$, and $\theta\in \mathbb{S}^{n-1}$. 
	These satisfy
	\begin{equation}
		\Delta_{\H^n}h_{\lambda,\theta}
		= -\Big(\lambda^2+\frac{(n-1)^2}{4}\Big)h_{\lambda,\theta}.
	\end{equation}
	The Helgason--Fourier transform (see \cite[Chapter III, equation (4)]{Hel_book_08}) is defined by
	\begin{equation}
		\mathcal{F}_{\mathcal{H}}(f)(\lambda,\theta)
		:= \int_{\H^n} f(x)\, h_{\lambda,\theta}(x)\, dV(x),
	\end{equation}
	with the inversion formula
	\begin{equation}
		f(x)=\int_{-\infty}^{\infty}\int_{\mathbb{S}^{n-1}}
		\overline{h_{\lambda,\theta}(x)}\,\mathcal{F}_{\mathcal{H}}(f)(\lambda,\theta)\,
		\frac{d\theta\, d\lambda}{|c(\lambda)|^2},
	\end{equation}
	where $c(\lambda)$ is Harish--Chandra's $c$-function given by
	\begin{equation}
		\frac{1}{|c(\lambda)|^2}
		= \frac{1}{2(2\pi)^n}\,
		\frac{\big|\Gamma\big(\mathsf{i}\lambda +\frac{n-1}{2}\big)\big|^2}{|\Gamma(\mathsf{i}\lambda)|^2}.
	\end{equation}
	Moreover, one has the Plancherel identity
	\begin{equation}\label{eq: Plancherel formula}
		\norm{f}_{L^2(\H^n)}^2
		=\int_{\H^n}|f(x)|^2\, dV(x)
		=\int_{\R \times \mathbb{S}^{n-1}}
		\big|\mathcal{F}_{\mathcal{H}}(f)(\lambda,\theta)\big|^2\,
		\frac{d\theta\, d\lambda}{|c(\lambda)|^2}.
	\end{equation}
	In addition, for $f\in L^2(\H^n)$,
	\begin{equation}
		\begin{split}
			\mathcal{F}_{\mathcal{H}}(\Delta_{\H^n}f)(\lambda,\theta)
			&=\int_{\H^n} \Delta_{\H^n} f(x)\, h_{\lambda,\theta}(x)\, dV(x)
			=\int_{\H^n} f(x)\, \Delta_{\H^n} h_{\lambda,\theta}(x)\, dV(x)\\
			&= -\Big(\lambda^2+\frac{(n-1)^2}{4}\Big)\mathcal{F}_{\mathcal{H}}(f)(\lambda,\theta).
		\end{split}
	\end{equation}
	Consequently, analogously to \eqref{eq: def of fractional Laplacian via Fourier}, we define
	\begin{equation}\label{eq: def of fractional Laplacian on H^n}
		\mathcal{F}_{\mathcal{H}}\big((-\Delta_{\H^n})^{s} f\big)(\lambda,\theta)
		=\Big(\lambda^2+\frac{(n-1)^2}{4}\Big)^{s}\mathcal{F}_{\mathcal{H}}(f)(\lambda,\theta).
	\end{equation}
	
	\subsubsection{The singular integral}
	
	The operator $(-\Delta_{\H^n})^{s}$ also admits a singular integral representation, established in \cite{BGS_15}.
	
	\begin{proposition}[\text{\cite[Theorem 2.5]{BGS_15}}]
		Let $s\in (0,1)$ and $n\ge 2$. Then
		\begin{equation}
			(-\Delta_{\H^n})^s f(x)
			= c_{n,s}\,\mathrm{P.V.}\int_{\H^n}\big(f(x)-f(y)\big)\,
			\mathcal{K}_{n,s}\big(d_{\H^n}(x,y)\big)\, dV(y),
		\end{equation}
		where
		\begin{equation}
			\begin{split}
				\mathcal{K}_{n,s}(\rho)
				:= \begin{cases}
					C_1 \Big( -\frac{\partial_{\rho}}{\sinh \rho} \Big)^{\frac{n-1}{2}}
					\Big( \rho^{-\frac{1+2s}{2}}K_{\frac{1+2s}{2}}\big( \frac{n-1}{2}\rho \big) \Big),
					&\text{for odd }n\ge 3,\\[0.3em]
					C_1 \displaystyle\int_{\rho}^{\infty}\frac{\sinh r}{\sqrt{\pi}\sqrt{\cosh r -\cosh \rho}}
					\Big( -\frac{\partial_{r}}{\sinh r} \Big)
					\Big( r^{-\frac{1+2s}{2}}K_{\frac{1+2s}{2}}\big( \frac{n-1}{2}r\big) \Big)\, dr,
					&\text{for even }n\ge 2,
				\end{cases}
			\end{split}
		\end{equation}
		$\mathrm{P.V.}$ denotes the Cauchy principal value, $K_\nu$ is the modified Bessel function of the second kind, and the constants are
		\begin{equation}\label{eq: constant C_1 and c_n,s}
			c_{n,s}:= \frac{8\sqrt{2}\Gamma(n+s)}{3\Gamma\big( \frac{n}{2}\big)\Gamma(-s)},
			\qquad
			C_1 := \frac{1}{2^{n-2+2s}\Gamma\big( \frac{n-1}{2}\big)\Gamma \big( \frac{1+2s}{2}\big)}.
		\end{equation}
	\end{proposition}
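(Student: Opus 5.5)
The plan is to derive the kernel from the heat semigroup (Bochner) representation of fractional powers, combined with the explicit heat kernel on $\H^n$. For $s\in(0,1)$ and $f\in C_c^\infty(\H^n)$, the spectral calculus gives
\[
(-\Delta_{\H^n})^s f(x)=\frac{1}{\Gamma(-s)}\int_0^\infty\big(e^{t\Delta_{\H^n}}f(x)-f(x)\big)\,\frac{dt}{t^{1+s}}.
\]
This identity can be checked directly through \eqref{eq: def of fractional Laplacian on H^n}, since on the Helgason--Fourier side it reduces to the scalar identity
\[
\mu^s=\frac{1}{\Gamma(-s)}\int_0^\infty\big(e^{-t\mu}-1\big)\,t^{-1-s}\,dt,\qquad \mu=\lambda^2+\tfrac{(n-1)^2}{4}.
\]
Write $e^{t\Delta_{\H^n}}f(x)=\int_{\H^n}p_t(d_{\H^n}(x,y))f(y)\,dV(y)$ and use stochastic completeness, $\int_{\H^n} p_t\,dV=1$. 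This lets me rewrite the integrand as $\int_{\H^n}(f(y)-f(x))\,p_t(d_{\H^n}(x,y))\,dV(y)$.

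Formally exchanging the order of integration then gives the kernel
\[
\mathcal K(\rho)\propto\int_0^\infty p_t(\rho)\,t^{-1-s}\,dt.
\]
The sign $-1/\Gamma(-s)>0$ is absorbed into $c_{n,s}$ after flipping $f(y)-f(x)$ to $f(x)-f(y)$.

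Next I insert the classical explicit formulas for $p_t$. For odd $n$,
\[
p_t(\rho)=c_n\,e^{-\frac{(n-1)^2}{4}t}\,t^{-1/2}\Big(-\frac{\partial_\rho}{\sinh\rho}\Big)^{\frac{n-1}{2}}e^{-\rho^2/4t}.
\]
For even $n$, $p_t$ is given by Millson's formula as the Abel-type integral $\int_\rho^\infty \frac{\sinh r}{\sqrt{\cosh r-\cosh\rho}}(\cdots)\,dr$ of the corresponding odd-dimensional expression. In both cases I pull the $\rho$-derivatives (and the $r$-integral) outside the $t$-integral. What remains is the scalar integral
\[
\int_0^\infty t^{-\frac32-s}\,e^{-\frac{\rho^2}{4t}-\frac{(n-1)^2}{4}t}\,dt,
\]
which I evaluate with the standard formula
\[
\int_0^\infty t^{\nu-1}e^{-a/t-bt}\,dt=2\Big(\frac ab\Big)^{\nu/2}K_\nu\big(2\sqrt{ab}\big),
\]
taking $\nu=-(\tfrac12+s)$ and using $K_{-\nu}=K_\nu$. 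The result is a multiple of $\rho^{-\frac{1+2s}{2}}K_{\frac{1+2s}{2}}\big(\tfrac{n-1}{2}\rho\big)$, which produces $\mathcal K_{n,s}$. Finally I track the normalizing constants of $p_t$ and of the Bessel identity to arrive at $C_1$ and $c_{n,s}$.

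The main obstacle is justifying the interchanges of integrals and derivatives, and the principal value.

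\emph{Interchanges away from the diagonal.} Away from $\rho=0$, the derivatives of the integrand are dominated by $t^{-N}e^{-c\rho^2/t}$ for small $t$ and by $e^{-(n-1)^2t/4}$-type decay for large $t$. Dominated convergence therefore allows the exchange for $\rho>0$.

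\emph{Behaviour near the diagonal.} Here $\mathcal K_{n,s}(\rho)\sim\rho^{-n-2s}$, so the integral is only conditionally convergent. I would truncate to $t\ge\varepsilon$ and $d_{\H^n}(x,y)\ge\delta$. Expanding $f$ to second order in normal coordinates at $x$, the first-order term integrates to zero over geodesic spheres by radial symmetry of the kernel. The second-order term is then integrable against $\rho^{-n-2s}\rho^2\sinh^{n-1}\rho\,d\rho$. This is what yields the P.V. limit.

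\emph{Behaviour at infinity.} Integrability is ensured by the exponential decay of $K_\nu$, which is of order $e^{-\frac{n-1}{2}\rho}$ and is further multiplied by the $(\partial_\rho/\sinh\rho)$ factors. For $f\in C_c^\infty$, this decay beats the volume growth $\sinh^{n-1}\rho$ in $\int (f(x)-f(y))\mathcal K\,dV$.
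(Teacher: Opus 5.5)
Your route is essentially the derivation behind the cited result \cite{BGS_15}: the Bochner formula, stochastic completeness of $\H^n$ to move $f(x)$ under the spatial integral, the explicit odd/even heat kernels, the Bessel integral $\int_0^\infty t^{\nu-1}e^{-a/t-bt}\,dt=2(a/b)^{\nu/2}K_\nu(2\sqrt{ab})$, and the second-order Taylor expansion with radial cancellation for the principal value. The paper itself gives no proof and only quotes that theorem. Your sketched justifications of the interchanges are adequate.

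One small correction concerns the behaviour at infinity. The decay does not strictly beat the volume growth: $\mathcal K_{n,s}(\rho)\sim\rho^{-1-s}e^{-(n-1)\rho}$ exactly compensates $\sinh^{n-1}\rho$, and integrability comes only from $\rho^{-1-s}$. A cleaner way to see it is
\[
\int_{d_{\H^n}(x,y)\ge 1}\int_0^\infty p_t\,t^{-1-s}\,dt\,dV(y)\le\int_0^\infty t^{-1-s}\min\{1,Ce^{-c/t}\}\,dt<\infty .
\]

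The step that cannot be carried out as you announce it is the last one, where you ``track the normalizing constants \ldots\ to arrive at $C_1$ and $c_{n,s}$''. The constants printed in the statement are not what this, or any correct, computation yields.

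\emph{The sign.} Your own remark that the prefactor is $-1/\Gamma(-s)>0$ already shows the problem. Since $\Gamma(-s)<0$ for $s\in(0,1)$, the printed $c_{n,s}$ is negative, while $C_1>0$ and $\mathcal K_{n,s}>0$. Take a nonnegative, nonzero $f\in C_c^\infty(\H^n)$ and a point $x$ where it attains its maximum. There the printed right-hand side is negative, whereas $(-\Delta_{\H^n})^sf(x)>0$.

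\emph{The odd-dimensional constant.} For odd $n$, use
\[
p_t(\rho)=(2\pi)^{-\frac{n-1}{2}}(4\pi t)^{-\frac12}e^{-\frac{(n-1)^2}{4}t}\big(-\tfrac{\partial_\rho}{\sinh\rho}\big)^{\frac{n-1}{2}}e^{-\rho^2/(4t)} .
\]
Your bookkeeping then gives the total prefactor
\[
\frac{(n-1)^{\frac12+s}}{2^{\frac{n-1}{2}}\pi^{\frac n2}\,|\Gamma(-s)|}
\]
in place of $c_{n,s}C_1$. The same prefactor appears for even $n$, provided the $1/\sqrt{\pi}$ is kept inside the Abel integral.

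\emph{The even-dimensional kernel.} The even-dimensional heat kernel formula you invoke carries $\big(-\frac{\partial_r}{\sinh r}\big)^{n/2}$ inside the Abel integral. Your argument therefore produces that power, not the single factor printed in the statement. The two agree only for $n=2$. For $n\ge 4$, a single factor would give a $\rho^{-2-2s}$ singularity instead of the required $\rho^{-n-2s}$.

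So your argument proves the formula with these corrections. You should state it that way rather than claim to recover the printed $C_1$ and $c_{n,s}$.
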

	
	In \cite[Section 2]{BGS_15}, it is shown that $\mathcal{K}_{n,s}(\rho)>0$ and that
	\begin{equation}
		\begin{split}
			\mathcal{K}_{n,s}(\rho) &\sim \rho^{-n-2s}  \qquad\qquad \text{as }\rho \to 0^+,\\
			\mathcal{K}_{n,s}(\rho) &\sim \rho^{-1-s}e^{-(n-1)\rho} \qquad \text{as }\rho \to \infty,
		\end{split}
	\end{equation}
	in the sense of the comparability notation introduced above.
	We refer to \cite{BGS_15, Hel_book_08, GGG_book_03} for further discussion of Fourier-analytic constructions of $(-\Delta_{\H^n})^s$.
	
	\subsubsection{The heat semigroup}
	
	We will also use the heat semigroup formulation of fractional powers. 
	Let $p_t(x,y)$ be the heat kernel on $\H^n$ for $t>0$, characterized by:
	\begin{enumerate}[(i)]
		\item $(\partial_t-\Delta_{\H^n})p_t(\cdot,y)=0$ for $t>0$;
		\item $p_t(\cdot,y)\to \delta_y$ as $t\to 0^+$ in the sense of distributions;
		\item (semigroup property) $p_{t+t'}(x,y)=\int_{\H^n} p_t(x,z)p_{t'}(z,y)\, dV(z)$;
		\item (symmetry) $p_t(x,y)=p_t(y,x)$.
	\end{enumerate}
	Moreover, $p_t(x,y)=p_t(\rho)$ with $\rho=d_{\H^n}(x,y)$ admits an explicit representation. 
	For our purposes, we mainly use the global heat kernel bounds (see \cite[Section 3]{DM88_heatkernel_hyper}):
	\begin{equation}\label{eq: global bounds heat kernel}
		p_t(\rho) \sim (1+\rho)(1+\rho+t)^{(n-3)/2}t^{-n/2}
		\exp\Big(-\frac{(n-1)^2}{4}t-\frac{n-1}{2}\rho -\frac{\rho^2}{4t}\Big),
		\qquad \rho \ge 0,\ t>0.
	\end{equation}
	These bounds will play a key role in the proof of the entanglement principle in Section~\ref{sec:entangle}.
	
	Using functional calculus, one may define the fractional Laplacian via the heat semigroup by
	\begin{equation}\label{eq: fractional via heat}
		(-\Delta_{\H^n})^s u(x)
		:= \frac{1}{\Gamma(-s)}\int_0^\infty\big(e^{t\Delta_{\H^n}}u(x)-u(x)\big)\,\frac{dt}{t^{1+s}},
		\qquad x\in \H^n,
	\end{equation}
	for $u\in \mathrm{Dom}\big((-\Delta_{\H^n})^s\big)
	=\{u\in L^2(\H^n):\, (-\Delta_{\H^n})^s u\in L^2(\H^n)\}$, where
	\[
	e^{t\Delta_{\H^n}}u(x):=\int_{\H^n} p_t(x,y)u(y)\, dV(y)
	\]
	solves
	\[
	\begin{cases}
		(\partial_t-\Delta_{\H^n})\big(e^{t\Delta_{\H^n}}u\big)=0 &\text{in }\H^n\times (0,\infty),\\
		e^{t\Delta_{\H^n}}u\big|_{t=0}=u &\text{in }\H^n.
	\end{cases}
	\]
	The definition \eqref{eq: fractional via heat} extends to $u\in H^s(\H^n)$ by a standard duality argument. 
	We also note that the fractional Laplacian on $\H^n$ can be characterized via a Caffarelli--Silvestre type extension; see \cite{BGS_15} for details.
	
	\subsection{Fractional Sobolev spaces}
	
	We use the notion of fractional Sobolev spaces introduced in \cite[Section 2]{BGS_15} and \cite[Section 3]{Tataru_strichartz}. 
	For \(a\in \R\), we define the \(L^2\)-based fractional Sobolev space on \(\H^n\) by
	\begin{equation}\label{eq:def of fractional Sobolev space}
		H^a(\H^n)
		:= \big\{ f\in \mathcal{D}'(\H^n):\ (\Id-\Delta_{\H^n})^{a/2}f \in L^2(\H^n)\big\},
	\end{equation}
	equipped with the norm
	\begin{equation}\label{eq:graph norm of H^a}
		\|f\|_{H^a(\H^n)}
		:= \|(\Id-\Delta_{\H^n})^{a/2}f\|_{L^2(\H^n)}.
	\end{equation}
	By the spectral calculus for the self-adjoint operator \(-\Delta_{\H^n}\), this is a Hilbert space. 
	Moreover, when \(a\ge 0\), one has the equivalent characterization
	\[
	H^a(\H^n)
	= \big\{ f\in L^2(\H^n):\ (-\Delta_{\H^n})^{a/2}f\in L^2(\H^n)\big\},
	\]
	and the norm \(\|f\|_{H^a(\H^n)}\) is equivalent to
	\begin{equation}\label{eq: graph norm of H^a}
		\|f\|_{L^2(\H^n)}+\|(-\Delta_{\H^n})^{a/2}f\|_{L^2(\H^n)}.
	\end{equation}
	
	Let \(\mathcal{O}\subset \H^n\) be a nonempty open set. 
	We denote by \(C_c^\infty(\mathcal{O})\) the space of smooth functions compactly supported in \(\mathcal{O}\). 
	For \(a\in \R\), we define
	\begin{align*}
		H^a(\mathcal{O}) &:= \{u|_{\mathcal{O}}:\ u\in H^a(\H^n)\},\\
		\wt H^a(\mathcal{O}) &:= \overline{C_c^\infty(\mathcal{O})}^{\,H^a(\H^n)}.
	\end{align*}
	The space \(H^a(\mathcal{O})\) is equipped with the quotient norm
	\[
	\|u\|_{H^a(\mathcal{O})}
	:= \inf\bigl\{ \|w\|_{H^a(\H^n)}:\ w\in H^a(\H^n)\ \text{and }\ w|_{\mathcal{O}}=u\bigr\}.
	\]
	Moreover, \(\wt H^a(\mathcal{O})\) is naturally identified with the subspace of \(H^a(\H^n)\) consisting of distributions supported in \(\overline{\mathcal{O}}\).
	
	In particular, when \(s\in(0,1)\), we define \(H^{-s}(\mathcal{O})\) as the dual space of \(\wt H^s(\mathcal{O})\), and \(\wt H^{-s}(\mathcal{O})\) as the dual space of \(H^s(\mathcal{O})\). That is,
	\[
	(\wt H^s(\mathcal{O}))^\ast = H^{-s}(\mathcal{O}),
	\qquad
	(H^s(\mathcal{O}))^\ast = \wt H^{-s}(\mathcal{O}),
	\]
	with respect to the duality pairing extending the \(L^2(\mathcal{O})\)-inner product.
	
	Finally, for any measurable subset \(D\subset \H^n\), we write
	\[
	(f,g)_{L^2(D)}:=\int_D f g \, dV.
	\]
	
	\subsection{The well-posedness}
	
	In this subsection, we study the well-posedness of the exterior value problem \eqref{eq: main}. 
	We begin with a mapping property of fractional powers.
	
	\begin{lemma}
		For $a\ge 0$, the operator $(-\Delta_{\H^n})^a$ extends to a bounded linear map
		\begin{equation}\label{eq: bounded linear map}
			(-\Delta_{\H^n})^a : H^r(\H^n) \to H^{r-2a}(\H^n),
		\end{equation}
		for any $r\in \R$. Moreover, $H^{a}(\H^n) \subset H^{b}(\H^n)$ for any $a\ge b$.
	\end{lemma}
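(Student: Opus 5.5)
The plan is to argue entirely through the spectral calculus of the self-adjoint operator $-\Delta_{\H^n}$, diagonalised by the Helgason--Fourier transform. Both $(-\Delta_{\H^n})^a$ and $(\Id-\Delta_{\H^n})^{a/2}$ act on the Fourier side as multiplication by functions of the spectral variable
\[
\mu(\lambda):=\lambda^2+\tfrac{(n-1)^2}{4}\ge \tfrac{(n-1)^2}{4}>0 .
\]
By \eqref{eq: def of fractional Laplacian on H^n} and the Plancherel identity \eqref{eq: Plancherel formula}, for $f$ in a dense class (say $C_c^\infty(\H^n)$, or $\mathcal{F}_{\mathcal H}^{-1}$ of compactly supported spectral data) we can write
\[
\|(-\Delta_{\H^n})^a f\|_{H^{r-2a}}^2=\int_{\R\times\mathbb S^{n-1}} (1+\mu(\lambda))^{r-2a}\,\mu(\lambda)^{2a}\,|\mathcal{F}_{\mathcal H}f(\lambda,\theta)|^2\,\frac{d\theta\,d\lambda}{|c(\lambda)|^2}.
\]
It therefore suffices to bound the multiplier ratio $\mu^{2a}(1+\mu)^{-2a}$ uniformly. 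Since $0<\mu/(1+\mu)<1$ and $a\ge 0$, this ratio is at most $1$. Hence
\[
\|(-\Delta_{\H^n})^a f\|_{H^{r-2a}}\le \|f\|_{H^r},
\]
with constant $1$. Note that we do not even need the spectral gap here; the gap would only be needed for the reverse inequality.

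To pass from the dense class to all of $H^r(\H^n)$, I will check that this class is dense in $H^r$ for every $r\in\R$. This follows because $(\Id-\Delta_{\H^n})^{r/2}$ is a unitary isomorphism from $H^r$ onto $L^2$, and it maps the spectrally-truncated functions onto a dense subspace of $L^2$. The bounded extension is then unique. For negative $r$, where elements are distributions, the same statement can be phrased through functional calculus. Namely,
\[
(-\Delta_{\H^n})^a=(\Id-\Delta_{\H^n})^{a}\circ m(-\Delta_{\H^n}),\qquad m(\mu)=\mu^a(1+\mu)^{-a},
\]
and $m(-\Delta_{\H^n})$ is bounded on $L^2$, hence on every $H^r$ since it commutes with $(\Id-\Delta_{\H^n})^{r/2}$. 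Meanwhile $(\Id-\Delta_{\H^n})^{a}$ is an isometry $H^r\to H^{r-2a}$ by the very definition \eqref{eq:graph norm of H^a}. This factorisation is the cleanest way to write the proof, and it makes the consistency of the extension with \eqref{eq: fractional via heat} on $H^s$ automatic, since both are defined by the same spectral calculus.

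The embedding $H^a\subset H^b$ for $a\ge b$ follows from the same Plancherel expression. We have $(1+\mu)^{b}\le (1+\mu)^{a}$ because $1+\mu\ge 1$, so $\|f\|_{H^b}\le\|f\|_{H^a}$. Equivalently, $(\Id-\Delta_{\H^n})^{(b-a)/2}$ is a contraction on $L^2$.

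The only point requiring care, rather than a real obstacle, is making sense of the objects for negative index. Here I must check that $H^r$ for $r<0$, defined as a subspace of $\mathcal D'(\H^n)$, coincides with the completion of $L^2$ (or $C_c^\infty$) in the $H^r$ norm. Equivalently, $(\Id-\Delta_{\H^n})^{r/2}$ must be a bijection onto $L^2$. I will deduce this from the duality $H^{-r}=(H^r)^\ast$ induced by the $L^2$ pairing together with self-adjointness, so that all operators above act by duality on distributions.
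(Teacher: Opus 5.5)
Your proof is correct and follows the paper's own route. Both pass through the Helgason--Fourier transform and Plancherel to the pointwise bounds $(1+\tau)^{r-2a}\tau^{2a}\le(1+\tau)^r$ and $(1+\tau)^b\le(1+\tau)^a$; your density argument and the factorization $(-\Delta_{\H^n})^a=(\Id-\Delta_{\H^n})^a\, m(-\Delta_{\H^n})$ just make explicit the extension to all $H^r(\H^n)$, including $r<0$, which the paper leaves implicit.
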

	
	\begin{proof}
		Let $A:= \mathrm{Id}-\Delta_{\H^n}$. Since $A$ is a polynomial in $-\Delta_{\H^n}$, the operators commute under functional calculus. 
		In what follows, let us set 
		$$
		\tau:=\lambda^2+\frac{(n-1)^2}{4}\ge 0.
		$$ 
		Using \eqref{eq: graph norm of H^a} and the Plancherel identity \eqref{eq: Plancherel formula}, we compute
		\begin{equation}
			\begin{split}
				\norm{(-\Delta_{\H^n})^{a} f}_{H^{r-2a}(\H^n)}^2
				&= \norm{A^{(r-2a)/2} (-\Delta_{\H^n})^{a} f}_{L^2(\H^n)}^2\\
				&=\int_{\R\times \mathbb{S}^{n-1}}
				\big| \mathcal{F}_{\mathcal{H}}(A^{(r-2a)/2} (-\Delta_{\H^n})^{a} f)(\lambda,\theta)\big|^2 \, \frac{d\theta\, d\lambda}{|c(\lambda)|^2} \\
				&= \int_{\R \times \mathbb{S}^{n-1}}
				(1+\tau)^{r-2a}\tau^{2a}\big| \mathcal{F}_{\mathcal{H}}(f)(\lambda,\theta) \big|^2 \, \frac{d\theta\, d\lambda}{|c(\lambda)|^2} \\
				&\le \int_{\R \times \mathbb{S}^{n-1}}
				(1+\tau)^{r}\big| \mathcal{F}_{\mathcal{H}}(f)(\lambda,\theta) \big|^2 \, \frac{d\theta\, d\lambda}{|c(\lambda)|^2},
			\end{split}
		\end{equation}
		where we used the elementary inequality
		$(1+\tau)^{r-2a}\tau^{2a}\le (1+\tau)^r$. 
		Hence,
		$\norm{(-\Delta_{\H^n})^a f}_{H^{r-2a}(\H^n)} \le \norm{A^{r/2} f}_{L^2(\H^n)}=\norm{f}_{H^r(\H^n)}$,
		which proves \eqref{eq: bounded linear map}.
		
		Let $a\ge b$ and let $f\in H^{a}(\H^n)$. 
		By Plancherel \eqref{eq: Plancherel formula} and the definition of the Sobolev norm via functional calculus,
		\begin{equation*}
			\|f\|_{H^{b}(\H^n)}^{2}
			\sim 
			\int_{\R\times \mathbb S^{n-1}} (1+\tau)^{b}\,
			\big|\mathcal F_{\mathcal H}(f)(\lambda,\theta)\big|^{2}\,
			\frac{d\theta\, d\lambda}{|c(\lambda)|^{2}} .
		\end{equation*}
		Since $a\ge b$ and $1+\tau\ge 1$, we have the pointwise inequality
		\[
		(1+\tau)^{b}\le (1+\tau)^{a}.
		\]
		Therefore,
		\begin{equation*}
			\|f\|_{H^{b}(\H^n)}^{2}
			\lesssim
			\int_{\R\times \mathbb S^{n-1}} (1+\tau)^{a}\,
			\big|\mathcal F_{\mathcal H}(f)(\lambda,\theta)\big|^{2}\,
			\frac{d\theta\, d\lambda}{|c(\lambda)|^{2}}
			\simeq
			\|f\|_{H^{a}(\H^n)}^{2}.
		\end{equation*}
		Hence $\|f\|_{H^{b}(\H^n)}\le C\|f\|_{H^{a}(\H^n)}$ for some constant $C>0$, and in particular
		$H^{a}(\H^n)\subset H^{b}(\H^n)$ continuously.
	\end{proof}
	
	Next, let $\Omega\subset \H^n$ be a bounded open set and consider
	\begin{equation}\label{eq: equation of well-posed}
		\begin{cases}
			\big( \mathcal{P}_{\H^n}+ q \big) u =F & \text{ in }\Omega,\\
			u=f &\text{ in }\Omega_e,
		\end{cases}
	\end{equation}
	where $\mathcal{P}_{\H^n}$ is the fractional polyharmonic operator given by \eqref{eq: fractional poly op.}.
	Define the bilinear form
	\begin{equation}\label{eq: bilinear form}
		\begin{split}
			B_q (u,w)
			&:= \sum_{k=1}^N b_k \big(  (-\Delta_{\H^n})^{s_k/2} u,\ (-\Delta_{\H^n})^{s_k/2} w \big)_{L^2(\H^n)}
			+ ( q u, w)_{L^2(\Omega)}.
		\end{split}
	\end{equation}
	
	\begin{lemma}[Well-posedness of the exterior Dirichlet problem]\label{lem: well-posedness}
		Let $\Omega \Subset \mathbb H^n$ be a bounded domain with Lipschitz boundary and $q \in L^\infty(\Omega)$. 
		Let $\mathcal{P}_{\H^n}$ be the operator given by \eqref{eq: fractional poly op.}. 
		Assume that \eqref{eq: eigenvalue condition} holds, i.e., the only solution $u\in \widetilde H^{s_N}(\Omega)$ of $(\mathcal{P}_{\mathbb H^n}+q)u=0$ in $\Omega$ is $u\equiv 0$. 
		Then for any $f\in H^{s_N}(\mathbb H^n)$ and $F\in (\widetilde H^{s_N}(\Omega))^*$, the problem
		\[
		\begin{cases}
			(\mathcal{P}_{\mathbb H^n}+q)u=F & \text{in }\Omega,\\
			u=f & \text{in }\Omega_e,
		\end{cases}
		\]
		admits a unique solution $u\in H^{s_N}(\mathbb H^n)$. Moreover,
		\begin{equation}\label{eq: solution_estimate_general}
			\|u\|_{H^{s_N}(\mathbb H^n)}
			\le C\big(\|f\|_{H^{s_N}(\mathbb H^n)} + \|F\|_{(\widetilde H^{s_N}(\Omega))^*}\big),
		\end{equation}
		for some constant $C>0$ independent of $u,f,$ and $F$.
	\end{lemma}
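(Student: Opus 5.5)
The plan is the standard reduction to homogeneous exterior data, followed by the Lax--Milgram theorem and the Fredholm alternative applied to the bilinear form \(B_q\) from \eqref{eq: bilinear form}. Writing \(u=f+v\) with \(v\in\wt H^{s_N}(\Omega)\), the problem is equivalent to finding \(v\) such that
\[
B_q(v,w)=\langle F,w\rangle - B_q(f,w)\qquad\text{for all } w\in \wt H^{s_N}(\Omega).
\]
The right-hand side is a bounded functional on \(\wt H^{s_N}(\Omega)\). Indeed, by \eqref{eq: bounded linear map} each \((-\Delta_{\H^n})^{s_k/2}\) maps \(H^{s_N}(\H^n)\) boundedly into \(H^{s_N-s_k}(\H^n)\subset L^2(\H^n)\). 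Together with \(q\in L^\infty\), this gives
\[
|B_q(f,w)|\lesssim \|f\|_{H^{s_N}}\|w\|_{H^{s_N}}.
\]

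Coercivity up to a compact term is the key structural step. Let \(\tau=\lambda^2+(n-1)^2/4\ge (n-1)^2/4>0\) (here \(n\ge2\) is used). By Plancherel \eqref{eq: Plancherel formula} and \(b_k>0\), we have
\[
\sum_k b_k\|(-\Delta_{\H^n})^{s_k/2}w\|_{L^2}^2\ \ge\ b_N\int \tau^{s_N}|\mathcal F_{\mathcal H}w|^2\,\frac{d\theta\, d\lambda}{|c(\lambda)|^2}\ \gtrsim\ \int (1+\tau)^{s_N}|\mathcal F_{\mathcal H}w|^2\,\frac{d\theta\, d\lambda}{|c(\lambda)|^2}=\|w\|_{H^{s_N}}^2,
\]
because \(\tau\ge (n-1)^2/4\) implies \(\tau^{s_N}\ge c(1+\tau)^{s_N}\). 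Hence the principal part is coercive on \(H^{s_N}(\H^n)\) thanks to the spectral gap of \(\H^n\). The potential term then gives
\[
B_q(w,w)\ge c\|w\|_{H^{s_N}}^2-\|q\|_{L^\infty}\|w\|_{L^2(\Omega)}^2 .
\]
It follows that \(B_q+\|q\|_\infty(\cdot,\cdot)_{L^2(\Omega)}\) is coercive. By Lax--Milgram, the associated operator \(G:(\wt H^{s_N}(\Omega))^*\to \wt H^{s_N}(\Omega)\) is bounded.

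The problem then becomes \(v-\|q\|_\infty G(\iota v)=G(\tilde F)\), where \(\iota:\wt H^{s_N}(\Omega)\to L^2(\Omega)\hookrightarrow(\wt H^{s_N}(\Omega))^*\). Compactness of \(\iota\) is the main obstacle. I would obtain it by localizing: \(\Omega\) is bounded, so it lies in a geodesic ball, which is covered by finitely many coordinate charts in which \(g_{\H^n}\) is uniformly equivalent to the Euclidean metric. On compactly supported functions the \(H^{s_N}(\H^n)\) norm is equivalent to the Euclidean \(H^{s_N}\) norm; for integer orders this is clear, and for fractional orders it follows by interpolation. The classical Rellich theorem on bounded sets then gives compactness. (Since \(s_N>0\), no boundary regularity is needed for \(\wt H\) spaces.) By the Fredholm alternative, injectivity, which is exactly hypothesis \eqref{eq: eigenvalue condition}, implies surjectivity with a bounded inverse. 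This yields a unique \(v\) with
\[
\|v\|_{H^{s_N}}\lesssim \|F\|_{(\wt H^{s_N}(\Omega))^*}+\|f\|_{H^{s_N}}.
\]
Adding \(f\) gives \eqref{eq: solution_estimate_general}.

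Uniqueness of \(u\) follows because the difference of two solutions lies in \(\wt H^{s_N}(\Omega)\) and solves the homogeneous problem, so it vanishes by \eqref{eq: eigenvalue condition}.
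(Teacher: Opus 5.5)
Your proposal is correct and follows essentially the same route as the paper: reduce to $v=u-f\in\wt H^{s_N}(\Omega)$, prove coercivity of $B_q$ up to an $L^2(\Omega)$ term, apply Lax--Milgram to the shifted form, use the compact embedding $\wt H^{s_N}(\Omega)\hookrightarrow L^2(\Omega)$, and conclude with the Fredholm alternative, where injectivity comes from \eqref{eq: eigenvalue condition}. The differences are only cosmetic: you get coercivity of the principal part from the spectral gap $\tau\ge (n-1)^2/4$, whereas the paper adds $\mu\|v\|_{L^2(\Omega)}^2$ and uses the graph-norm equivalence; you also pose the Fredholm equation on $\wt H^{s_N}(\Omega)$ rather than on $L^2(\Omega)$, and you justify the compactness step in more detail.
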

	
	\begin{proof}
		Write $u=v+f$ with $v\in \widetilde H^{s_N}(\Omega)$. 
		Then $v$ satisfies the variational problem
		\[
		B_q(v,w)=\widetilde F(w),
		\quad \text{for all } w\in \widetilde H^{s_N}(\Omega),
		\]
		where $\widetilde F(w):=F(w)-B_q(f,w)$. 
		The form $B_q$ is bounded on $\widetilde H^{s_N}(\Omega)$.
		
		Let $q_-(x)=\max\{-q(x),0\}$ and choose $\mu>\|q_-\|_{L^\infty(\Omega)}$. 
		Then for $v\in \widetilde H^{s_N}(\Omega)$,
		\[
		B_q(v,v)+\mu\|v\|_{L^2(\Omega)}^2
		\ge
		b_N\|(-\Delta_{\mathbb H^n})^{s_N/2}v\|_{L^2(\mathbb H^n)}^2
		+(\mu-\|q_-\|_{L^\infty(\Omega)})\|v\|_{L^2(\Omega)}^2,
		\]
		hence
		\[
		B_q(v,v)+\mu\|v\|_{L^2(\Omega)}^2
		\ge c \|v\|_{\widetilde H^{s_N}(\Omega)}^2
		\]
		for some $c>0$. 
		By the Lax--Milgram theorem, there exists a bounded operator
		\[
		G_\mu : (\widetilde H^{s_N}(\Omega))^\ast \to \widetilde H^{s_N}(\Omega)
		\]
		such that
		\[
		B_q(v,w)+\mu(v,w)_{L^2(\Omega)}=\ell(w)
		\quad \text{for all }w\in \widetilde H^{s_N}(\Omega).
		\]
		
		Since $\Omega\subset \H^n$ is bounded and $H^n$ has bounded geometry, the geometry is locally comparable to Euclidean space. 
		In particular, $\widetilde H^{s_N}(\Omega)\hookrightarrow L^2(\Omega)$ is compact by the classical Rellich--Kondrachov theorem.
		Let $\iota:\widetilde H^{s_N}(\Omega)\hookrightarrow L^2(\Omega)$ be the embedding and set
		\[
		K_\mu:=\iota \circ G_\mu \circ \iota^\ast 
		: L^2(\Omega)\to L^2(\Omega).
		\]
		Since $\iota : \widetilde H^{s_N}(\Omega) \hookrightarrow L^2(\Omega)$ is compact, the operator $K_\mu$ is compact on $L^2(\Omega)$.
		
		The original problem is equivalent to the Fredholm equation of index zero
		\[
		(\mathrm{Id}+\mu K_\mu)v
		=
		G_\mu \widetilde F
		\quad \text{in } L^2(\Omega).
		\]
		By the assumption that $0$ is not a Dirichlet eigenvalue of $\mathcal{P}_{\H^n}+q$, the homogeneous equation
		\[
		(\mathcal{P}_{\mathbb H^n}+q)v=0,
		\quad v\in \widetilde H^{s_N}(\Omega),
		\]
		admits only the trivial solution, hence the Fredholm operator has a trivial kernel and is invertible. 
		Therefore, $v$ (and thus $u=v+f$) exists uniquely, and the estimate \eqref{eq: solution_estimate_general} follows from the preceding bounds.
	\end{proof}
	
	\noindent
	Note that Assumption~\ref{exponent condition} is not needed to show the well-posedness result in Lemma~\ref{lem: well-posedness}.
	
	\subsection{The DN map}\label{subsec:DNmap}
	
	Let us define the DN map associated with the exterior value problem \eqref{eq: main}, where $q\in L^\infty(\Omega)$ satisfies \eqref{eq: eigenvalue condition}. 
	We introduce the abstract trace space
	\[
	X := H^{s_N}(\mathbb H^n)\big/ \widetilde H^{s_N}(\Omega),
	\]
	where $\widetilde H^{s_N}(\Omega)$ denotes the closure of $C_c^\infty(\Omega)$ in $H^{s_N}(\mathbb H^n)$.
	We write $[f]\in X$ for the equivalence class of $f\in H^{s_N}(\mathbb H^n)$.
	
	\begin{lemma}[DN map on $\mathbb H^n$]\label{lem:DNmap_hyp}
		Let $\Omega\subset \mathbb H^n$ be a bounded domain, let $0<s_1<\cdots<s_N$, and let
		$q\in L^\infty(\Omega)$ satisfy \eqref{eq: eigenvalue condition}.
		Then there exists a bounded linear map
		\[
		\Lambda_q : X \to X^*,
		\]
		defined by
		\begin{equation}\label{eq:DN_def_hyp}
			\langle \Lambda_q [f], [g]\rangle := B_q(u_f, g),
			\qquad f,g\in H^{s_N}(\mathbb H^n),
		\end{equation}
		where $u_f\in H^{s_N}(\mathbb H^n)$ is the unique solution of \eqref{eq: main} such that
		$u_f-f\in \widetilde H^{s_N}(\Omega)$.
		Moreover, $\Lambda_q$ is symmetric in the sense that
		\begin{equation}\label{eq:DN_sym_hyp}
			\langle \Lambda_q [f], [g]\rangle=\langle \Lambda_q [g], [f]\rangle,
			\qquad f,g\in H^{s_N}(\mathbb H^n).
		\end{equation}
	\end{lemma}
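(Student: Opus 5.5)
We need three things: $\langle \Lambda_q[f],[g]\rangle$ does not depend on the representatives $f,g$; the resulting map $X\to X^*$ is bounded; and it is symmetric. All three follow from the bilinear form $B_q$ in \eqref{eq: bilinear form}, the solution estimate of Lemma~\ref{lem: well-posedness}, and the symmetry of $B_q$.

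\emph{Well-definedness.} Fix $f,g\in H^{s_N}(\H^n)$.

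First we change the representative of $g$. Let $\phi\in \wt H^{s_N}(\Omega)$. Since $u_f$ solves \eqref{eq: main} with $F=0$, its weak formulation in the proof of Lemma~\ref{lem: well-posedness} gives $B_q(u_f,w)=0$ for all $w\in\wt H^{s_N}(\Omega)$. Hence $B_q(u_f,g+\phi)=B_q(u_f,g)$.

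Next we change the representative of $f$. If $f'=f+\phi$ with $\phi\in\wt H^{s_N}(\Omega)$, then $u_f$ already satisfies $u_f-f'\in\wt H^{s_N}(\Omega)$ and solves the same equation in $\Omega$. By the uniqueness part of Lemma~\ref{lem: well-posedness}, $u_{f'}=u_f$.

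So the pairing depends only on $[f]$ and $[g]$. It is bilinear because the solution map $f\mapsto u_f$ is linear, again by uniqueness.

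\emph{Boundedness.}

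1. The form $B_q$ is bounded on $H^{s_N}(\H^n)\times H^{s_N}(\H^n)$. Indeed, $(-\Delta_{\H^n})^{s_k/2}$ maps $H^{s_N}\to H^{s_N-s_k}\subset L^2$ by \eqref{eq: bounded linear map} and the inclusion in the same lemma, and $q\in L^\infty(\Omega)$ handles the potential term.

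2. Combining this with \eqref{eq: solution_estimate_general} (taking $F=0$) gives
\[
|\langle\Lambda_q[f],[g]\rangle|\le C\|f\|_{H^{s_N}}\|g\|_{H^{s_N}}.
\]

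3. The left-hand side is invariant under $f\mapsto f+\phi$ and $g\mapsto g+\psi$ with $\phi,\psi\in\wt H^{s_N}(\Omega)$. Taking the infimum over such $\phi,\psi$ turns the right-hand side into $C\|[f]\|_X\|[g]\|_X$. Here $X$ carries the quotient norm, which is a Banach (indeed Hilbert) norm because $\wt H^{s_N}(\Omega)$ is closed.

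\emph{Symmetry.} The form $B_q$ is symmetric: the operators $(-\Delta_{\H^n})^{s_k/2}$ are real and self-adjoint, and $(\cdot,\cdot)_{L^2}$ is taken without conjugation.

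Since $u_g-g\in\wt H^{s_N}(\Omega)$, the first step of well-definedness lets us replace $g$ by $u_g$:
\[
\langle \Lambda_q[f],[g]\rangle=B_q(u_f,g)=B_q(u_f,u_g).
\]
The same argument with the roles exchanged gives $\langle\Lambda_q[g],[f]\rangle=B_q(u_g,u_f)$. These agree by the symmetry of $B_q$, which proves \eqref{eq:DN_sym_hyp}.

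The only point needing care is the claim $B_q(u_f,w)=0$ for $w\in\wt H^{s_N}(\Omega)$. This requires that the solution from Lemma~\ref{lem: well-posedness} is meant in this weak sense, and the reduction $u=v+f$ in its proof shows that it is. Once this is in place, everything else is routine.
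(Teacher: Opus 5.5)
Your proof is correct and follows the paper's argument essentially step for step. You get independence of representatives from uniqueness of $u_f$ and the weak identity $B_q(u_f,w)=0$ for $w\in\widetilde H^{s_N}(\Omega)$, boundedness from continuity of $B_q$ on $H^{s_N}(\mathbb H^n)$ together with the well-posedness estimate with $F=0$ and an infimum over representatives, and symmetry from $B_q(u_f,g)=B_q(u_f,u_g)=B_q(u_g,u_f)=B_q(u_g,f)$. One small point: symmetry of $B_q$ is immediate from its defining formula, since the pairing carries no conjugation, so you do not need self-adjointness of $(-\Delta_{\mathbb H^n})^{s_k/2}$ for it.
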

	
	\begin{proof}
		\textit{Step 1: Well-definedness with respect to the quotient representatives.}
		Let \(f,g\in H^{s_N}(\mathbb H^n)\). We first show that the quantity
		\[
		B_q(u_f,g)
		\]
		depends only on the equivalence classes \([f],[g]\in X\).
		
		To see that \(u_f\) depends only on \([f]\), let \(\varphi\in \widetilde H^{s_N}(\Omega)\) and set
		\[
		f':=f+\varphi.
		\]
		Since \(u_f-f\in \widetilde H^{s_N}(\Omega)\), we have
		\[
		u_f-f'=(u_f-f)-\varphi \in \widetilde H^{s_N}(\Omega).
		\]
		Hence \(u_f\) is also the weak solution corresponding to the exterior datum \(f'\). By the uniqueness of weak solutions, we obtain
		\[
		u_{f'}=u_f.
		\]
		Therefore, \(u_f\) depends only on the class \([f]\).
		
		Next, let \(\psi\in \widetilde H^{s_N}(\Omega)\) and set
		\[
		g':=g+\psi.
		\]
		Since \(u_f\) solves
		\[
		(\mathcal P_{\mathbb H^n}+q)u_f=0 \quad \text{in }\Omega
		\]
		in the weak sense, we have
		\[
		B_q(u_f,\eta)=0 \qquad \text{for all }\eta\in \widetilde H^{s_N}(\Omega).
		\]
		In particular,
		\[
		B_q(u_f,\psi)=0.
		\]
		By bilinearity, it follows that
		\[
		B_q(u_f,g')=B_q(u_f,g+\psi)=B_q(u_f,g)+B_q(u_f,\psi)=B_q(u_f,g).
		\]
		Thus \(B_q(u_f,g)\) depends only on \([g]\).
		
		Combining the two observations above, we conclude that \eqref{eq:DN_def_hyp} depends only on \([f]\) and \([g]\). Hence \(\Lambda_q\) is well defined.
		
		\smallskip
		\textit{Step 2: Boundedness.}
		By Cauchy--Schwarz and the definition of $B_q$,
		\[
		|B_q(u_f,g)|
		\le
		\sum_{k=1}^N b_k
		\|(-\Delta_{\mathbb H^n})^{s_k/2}u_f\|_{L^2(\mathbb H^n)}
		\|(-\Delta_{\mathbb H^n})^{s_k/2}g\|_{L^2(\mathbb H^n)}
		+\|q\|_{L^\infty(\Omega)}\|u_f\|_{L^2(\Omega)}\|g\|_{L^2(\Omega)}.
		\]
		Since $s_k\le s_N$, we have continuous embeddings
		$\|(-\Delta_{\mathbb H^n})^{s_k/2}h\|_{L^2}\le C \|h\|_{H^{s_N}(\mathbb H^n)}$ and
		$\|h\|_{L^2(\Omega)}\le C\|h\|_{H^{s_N}(\mathbb H^n)}$.
		Therefore
		\[
		|B_q(u_f,g)|
		\le C \|u_f\|_{H^{s_N}(\mathbb H^n)}\|g\|_{H^{s_N}(\mathbb H^n)}.
		\]
		By \eqref{eq: solution_estimate_general} with $F=0$,
		$\|u_f\|_{H^{s_N}(\mathbb H^n)}\le C\|f\|_{H^{s_N}(\mathbb H^n)}$.
		Thus
		\[
		|\langle \Lambda_q[f],[g]\rangle|
		\le C \|f\|_{H^{s_N}(\mathbb H^n)}\|g\|_{H^{s_N}(\mathbb H^n)}.
		\]
		Taking the infimum over representatives yields
		\[
		|\langle \Lambda_q[f],[g]\rangle|
		\le C \|[f]\|_{X}\,\|[g]\|_{X},
		\]
		so $\Lambda_q:X\to X^*$ is bounded.
		
		\smallskip
		\textit{Step 3: Symmetry.}
		Let $u_f,u_g$ be the corresponding solutions.
		Using that $g-u_g\in \widetilde H^{s_N}(\Omega)$ and $f-u_f\in \widetilde H^{s_N}(\Omega)$, and that
		$B_q(u_f,\eta)=B_q(u_g,\eta)=0$ for all $\eta\in \widetilde H^{s_N}(\Omega)$, we obtain
		\[
		B_q(u_f,g)=B_q(u_f,u_g)=B_q(u_g,u_f)=B_q(u_g,f),
		\]
		which proves \eqref{eq:DN_sym_hyp}.
	\end{proof}
	
	\begin{remark}
		If $\partial\Omega$ is Lipschitz, one may identify the quotient space
		$X=H^{s_N}(\mathbb H^n)/\widetilde H^{s_N}(\Omega)$ with a Sobolev space on the exterior domain
		$\Omega_e$ (with identifications depending on the range of $s_N$). In the present work, the abstract quotient definition is sufficient. For simplicity, we write $f$ in place of $[f]$ in the remainder of the paper.
	\end{remark}
	
	\begin{proposition}[Characterization of the DN map]\label{prop:DN_characterization}
		Let $f\in H^{s_N}(\mathbb H^n)$ and let $u_f\in H^{s_N}(\mathbb H^n)$ be the unique solution of
		\[
		\begin{cases}
			(\mathcal P_{\mathbb H^n}  + q)  u_f = 0 & \text{ in }\Omega,\\
			u_f = f & \text{ in }\Omega_e.
		\end{cases}
		\]
		Then the DN map satisfies
		\[
		\Lambda_q f
		=
		\big(\mathcal P_{\mathbb H^n} u_f\big)\big|_{\Omega_e}
		\]
		in the sense that for all $g\in H^{s_N}(\mathbb H^n)$ with $\supp(g)\subset \Omega_e$,
		\begin{equation}\label{eq:DN_characterization}
			\langle \Lambda_q f, g \rangle
			=
			\int_{\Omega_e}
			(\mathcal P_{\mathbb H^n} u_f)\, g \, dV.
		\end{equation}
	\end{proposition}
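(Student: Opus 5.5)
The identity \eqref{eq:DN_characterization} will follow by unfolding the definition \eqref{eq:DN_def_hyp} and using that the potential term disappears once $g$ vanishes on $\Omega$. Fix $g\in H^{s_N}(\H^n)$ with $\supp(g)\subset\Omega_e$. Lemma~\ref{lem:DNmap_hyp} gives
\[
\langle \Lambda_q f, g\rangle = B_q(u_f,g)
= \sum_{k=1}^N b_k \big((-\Delta_{\H^n})^{s_k/2}u_f,(-\Delta_{\H^n})^{s_k/2}g\big)_{L^2(\H^n)} + (qu_f,g)_{L^2(\Omega)} .
\]
Since $g=0$ a.e.\ in $\Omega$ (its support lies in $\Omega_e=\H^n\setminus\overline\Omega$), the term $(qu_f,g)_{L^2(\Omega)}$ vanishes.

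It remains to rewrite each bilinear term as a pairing of $(-\Delta_{\H^n})^{s_k}u_f$ with $g$. I would work on the Helgason--Fourier side via the Plancherel identity \eqref{eq: Plancherel formula}. With $\tau=\lambda^2+\frac{(n-1)^2}{4}$, the multiplier of $(-\Delta_{\H^n})^{s_k/2}$ is $\tau^{s_k/2}$, and it is real. Hence
\[
\big((-\Delta_{\H^n})^{s_k/2}u_f,(-\Delta_{\H^n})^{s_k/2}g\big)_{L^2(\H^n)}
=\langle (-\Delta_{\H^n})^{s_k}u_f, g\rangle .
\]
The right-hand side is the $H^{-s_N}$--$H^{s_N}$ duality pairing. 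It is well defined because, by the mapping property \eqref{eq: bounded linear map} with $r=s_N$, one has $(-\Delta_{\H^n})^{s_k}u_f\in H^{s_N-2s_k}(\H^n)\subset H^{-s_N}(\H^n)$, using $s_k\le s_N$ and the embedding part of the same lemma.

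To justify the identity, I would first take $u_f,g\in C_c^\infty(\H^n)$, where it follows from self-adjointness of the functional calculus. Both sides are continuous bilinear forms on $H^{s_N}\times H^{s_N}$, so the identity extends to all of $H^{s_N}(\H^n)$ by density of $C_c^\infty(\H^n)$. This density is standard on $\H^n$ because it is complete with bounded geometry.

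Summing over $k$ then gives
\[
\langle\Lambda_q f,g\rangle=\langle \mathcal P_{\H^n}u_f, g\rangle .
\]
Because $g$ is supported in $\Omega_e$, this pairing only involves the restriction $(\mathcal P_{\H^n}u_f)|_{\Omega_e}$, and it is written as $\int_{\Omega_e}(\mathcal P_{\H^n}u_f)\,g\,dV$, understood as a duality pairing when $\mathcal P_{\H^n}u_f$ is not a function. This is exactly \eqref{eq:DN_characterization}.

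The step needing the most care is this distributional interpretation. The restriction to $\Omega_e$ should be read as an element of $H^{-s_N}(\Omega_e)$ acting on $g\in\wt H^{s_N}(\Omega_e)$. Identifying $g$ with $\supp g\subset\Omega_e$ as such an element may require approximating it by $C_c^\infty(\Omega_e)$ functions, which is the delicate point when $\supp g$ reaches $\partial\Omega$. If necessary, I would restrict to $g$ with $\supp g\Subset\Omega_e$ (the case used later with $W_2\Subset\Omega_e$) and then conclude by density.
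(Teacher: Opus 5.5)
Your proposal is correct and follows essentially the same route as the paper: the potential term drops because $g$ vanishes on $\Omega$, each term $\bigl((-\Delta_{\H^n})^{s_k/2}u_f,(-\Delta_{\H^n})^{s_k/2}g\bigr)_{L^2(\H^n)}$ becomes the duality pairing $\langle(-\Delta_{\H^n})^{s_k}u_f,g\rangle$ by self-adjointness, and summing over $k$ gives the claim; your Plancherel and density argument only spells out the step the paper states directly. The worry in your last paragraph is unfounded: $\supp g$ is closed and $\overline\Omega$ is compact, so $\supp g\subset\Omega_e$ already forces $d_{\H^n}(\supp g,\overline\Omega)>0$, and $\supp g$ can never reach $\partial\Omega$.
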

	
	\begin{proof}
		Let $g\in H^{s_N}(\mathbb H^n)$ with $\supp(g)\subset\Omega_e$.
		By definition,
		\[
		\langle \Lambda_q f, g \rangle = B_q(u_f,g).
		\]
		Since $g$ vanishes in $\Omega$, the potential term disappears and
		\[
		B_q(u_f,g)
		=
		\sum_{k=1}^N b_k
		\big(
		(-\Delta_{\mathbb H^n})^{s_k/2}u_f,
		(-\Delta_{\mathbb H^n})^{s_k/2}g
		\big)_{L^2(\mathbb H^n)}.
		\]
		Using the self-adjointness of $(-\Delta_{\H^n})^{s_k/2}$ on $L^2(\H^n)$ (equivalently, duality),
		\[
		\big(
		(-\Delta_{\mathbb H^n})^{s_k/2}u_f,
		(-\Delta_{\mathbb H^n})^{s_k/2}g
		\big)_{L^2(\H^n)}
		=
		\big\langle
		(-\Delta_{\mathbb H^n})^{s_k}u_f,\ g
		\big\rangle_{H^{-s_k}(\H^n)\times H^{s_k}(\H^n)}.
		\]
		Summing over $k$ yields
		\[
		B_q(u_f,g)=\int_{\H^n}(\mathcal P_{\mathbb H^n}u_f)\, g\, dV.
		\]
		Since $g$ is supported in $\Omega_e$, this reduces to \eqref{eq:DN_characterization}.
	\end{proof}
	
	\begin{lemma}[Integral identity]\label{lem:integral_identity_hyp}
		Let $\Omega \Subset \mathbb H^n$ be a bounded domain and let $q_1,q_2\in L^\infty(\Omega)$ satisfy \eqref{eq: eigenvalue condition}.
		Let $\Lambda_{q_1}$ and $\Lambda_{q_2}$ be the corresponding DN maps. Then for any $f,g\in H^{s_N}(\mathbb H^n)$,
		\begin{equation}\label{eq: integral_identity_hyp}
			\langle (\Lambda_{q_1}-\Lambda_{q_2}) f, g\rangle
			=
			\int_{\Omega} (q_1-q_2)u_1 u_2 \, dV,
		\end{equation}
		where $u_j\in H^{s_N}(\mathbb H^n)$ solves
		\begin{equation}\label{eq: main j=1,2}
			\begin{cases}
				\big( \mathcal{P}_{\H^n}+q_j \big) u_j=0 &\text{ in }\Omega , \\
				u_1=f \quad  \text{and}\quad u_2=g &\text{ in }\Omega_e,
			\end{cases}
		\end{equation}
		for $j=1,2$.
	\end{lemma}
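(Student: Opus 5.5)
The identity \eqref{eq: integral_identity_hyp} follows from the definition \eqref{eq:DN_def_hyp} of the DN map, the symmetry \eqref{eq:DN_sym_hyp} from Lemma~\ref{lem:DNmap_hyp}, and the weak formulation of the equations satisfied by $u_1$ and $u_2$. We fix $f,g\in H^{s_N}(\H^n)$ and let $u_1,u_2\in H^{s_N}(\H^n)$ be the solutions of \eqref{eq: main j=1,2} given by Lemma~\ref{lem: well-posedness}. Thus $u_1-f$ and $u_2-g$ lie in $\wt H^{s_N}(\Omega)$, and
\[
B_{q_j}(u_j,\eta)=0 \qquad\text{for all }\eta\in \wt H^{s_N}(\Omega),\ j=1,2.
\]

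For the first term we use \eqref{eq:DN_def_hyp}, which gives $\langle \Lambda_{q_1}f,g\rangle = B_{q_1}(u_1,g)$. Since $g-u_2\in \wt H^{s_N}(\Omega)$ and $u_1$ is a weak solution, $B_{q_1}(u_1,g-u_2)=0$. Hence
\[
\langle \Lambda_{q_1}f,g\rangle = B_{q_1}(u_1,u_2).
\]

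For the second term we first apply the symmetry \eqref{eq:DN_sym_hyp} to get $\langle \Lambda_{q_2}f,g\rangle=\langle\Lambda_{q_2}g,f\rangle=B_{q_2}(u_2,f)$, using that $u_2$ is the solution with exterior datum $g$ for the potential $q_2$. Because $f-u_1\in\wt H^{s_N}(\Omega)$ and $u_2$ solves the $q_2$-equation weakly, we may replace $f$ by $u_1$:
\[
\langle \Lambda_{q_2}f,g\rangle=B_{q_2}(u_2,u_1)=B_{q_2}(u_1,u_2).
\]
The last equality holds because $B_{q_2}$ is symmetric: each term $b_k\big((-\Delta_{\H^n})^{s_k/2}u,(-\Delta_{\H^n})^{s_k/2}w\big)_{L^2}$ is a bilinear pairing without complex conjugation, consistent with the convention $(f,g)_{L^2(D)}=\int_D fg\,dV$, and the potential term is symmetric as well.

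Subtracting the two expressions, the fractional terms $\sum_k b_k\big((-\Delta_{\H^n})^{s_k/2}u_1,(-\Delta_{\H^n})^{s_k/2}u_2\big)_{L^2(\H^n)}$ are identical and cancel. Only the potential terms remain, which gives
\[
\langle (\Lambda_{q_1}-\Lambda_{q_2})f,g\rangle=(q_1u_1,u_2)_{L^2(\Omega)}-(q_2u_1,u_2)_{L^2(\Omega)}=\int_\Omega(q_1-q_2)u_1u_2\,dV.
\]

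There is no real obstacle here. The one point needing care is the replacement of exterior data by solutions, which requires the test differences $g-u_2$ and $f-u_1$ to lie in $\wt H^{s_N}(\Omega)$; this is exactly how the solutions are constructed in Lemma~\ref{lem: well-posedness}.
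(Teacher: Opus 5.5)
Your proof is correct and follows essentially the same route as the paper: replace exterior data by solutions using the weak formulation, then use symmetry of $B_q$ so the fractional terms cancel. Your explicit use of the DN symmetry, $\langle \Lambda_{q_2} f,g\rangle=\langle \Lambda_{q_2} g,f\rangle=B_{q_2}(u_2,f)=B_{q_2}(u_2,u_1)$, is in fact the step the paper's written proof needs: as literally written, the paper's equalities $\langle \Lambda_{q_2} f,g\rangle=B_{q_2}(u_2,g)$ and $B_{q_2}(u_2,g)=B_{q_2}(u_2,u_1)$ treat $u_2$ as if its exterior datum were $f$, when it is $g$.
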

	
	\begin{proof}
		By definition of the DN map,
		\[
		\langle \Lambda_{q_1} f, g\rangle
		=
		B_{q_1}(u_1,g),
		\qquad
		\langle \Lambda_{q_2} f, g\rangle
		=
		B_{q_2}(u_2,g).
		\]
		Since $u_2-g\in \widetilde H^{s_N}(\Omega)$ and $u_1$ solves $(\mathcal{P}_{\H^n}+q_1)u_1=0$ in $\Omega$, we have
		\[
		B_{q_1}(u_1,u_2-g)=0,
		\]
		and therefore $B_{q_1}(u_1,g)=B_{q_1}(u_1,u_2)$. Similarly, $B_{q_2}(u_2,g)=B_{q_2}(u_2,u_1)$. 
		Using symmetry of $B_q$, we obtain
		\[
		\langle (\Lambda_{q_1}-\Lambda_{q_2}) f, g\rangle
		=
		B_{q_1}(u_1,u_2)-B_{q_2}(u_2,u_1).
		\]
		Expanding $B_{q_j}$, the fractional Laplacian terms cancel, and we arrive at
		\[
		B_{q_1}(u_1,u_2)-B_{q_2}(u_2,u_1)
		=
		\int_\Omega (q_1-q_2) u_1 u_2 \, dV,
		\]
		which proves \eqref{eq: integral_identity_hyp}.
	\end{proof}

	\section{Entanglement principle}\label{sec:entangle}
	
	In this section, we prove Theorem~\ref{thm: ent}. By reducing integer parts of the exponents, it suffices to establish the result in the case $\{s_k\}_{k=1}^N\subset (0,1)$. Let $e_0=(1,0,\dots,0)\in \mathbb H^n$. We begin with the following smooth, rapidly decaying version.
	
	\begin{lemma}[Entanglement principle for smooth functions]\label{lem: ent smooth}
		Let \(O \subset \H^n\) be a nonempty open set with \(n \ge 2\). Let \(N\in \N\) and
		\[
		0<\alpha_1<\alpha_2<\cdots<\alpha_N<1.
		\]
		For \(\gamma>\frac{n-1}{2}\), define
		\[
		\rho_\gamma(x):=e^{-\gamma\sqrt{1+d_{\H^n}(e_0,x)^2}}, \qquad x\in \H^n.
		\]
		Assume that \(\{v_k\}_{k=1}^N\subset C^\infty(\H^n)\) satisfies the exponential decay condition: for every multi-index \(\beta\) there exists \(C_\beta>0\) such that
		\begin{equation}\label{eq: exponential decay condition at infinity}
			|\nabla^\beta v_k(x)| \le C_\beta \rho_\gamma(x)
			\qquad \text{for all }x\in \H^n,\ \ k=1,\dots,N.
		\end{equation}
		If
		\begin{equation}\label{condition_entanglement_w_reduced}
			\begin{split}
				v_1|_O=\cdots=v_N|_O=0
				\qquad\text{and}\qquad
				\left.\left(\sum_{k=1}^N (-\Delta_{\H^n})^{\alpha_k}v_k\right)\right|_O=0,
			\end{split}
		\end{equation}
		then
		\[
		v_k\equiv 0 \qquad \text{in }\H^n,\quad k=1,\dots,N.
		\]
	\end{lemma}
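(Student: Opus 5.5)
We follow the strategy of \cite{FKU24, FL24}: use the heat semigroup representation \eqref{eq: fractional via heat}, exploit vanishing on $O$ to turn the condition into a statement about a function of $t$, and then use the non-resonance of the exponents through an analyticity/Mellin-type argument. Fix $x\in O$. Since $v_k(x)=0$ on $O$, for $x\in O$ we have
\[
(-\Delta_{\H^n})^{\alpha_k}v_k(x)=\frac{1}{\Gamma(-\alpha_k)}\int_0^\infty \big(e^{t\Delta_{\H^n}}v_k\big)(x)\,\frac{dt}{t^{1+\alpha_k}}.
\]
The first step is to justify this and the manipulations below. The exponential decay \eqref{eq: exponential decay condition at infinity} with $\gamma>\frac{n-1}{2}$, combined with the global heat kernel bounds \eqref{eq: global bounds heat kernel} and the volume growth $e^{(n-1)r}$ from \eqref{eq: volume in polar}, should give that $e^{t\Delta}v_k(x)$ decays as $t\to\infty$ (at least like $e^{-(n-1)^2t/4}$ times a polynomial). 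As $t\to0$ it decays faster than any power of $t$, because $v_k$ vanishes near $x$ and $p_t(x,y)\lesssim e^{-c/t}$ for $d(x,y)\ge\delta$.

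The second step applies $\Delta^m$ to the condition inside $O$. Since $v_k=0$ on $O$, all powers $(-\Delta)^m$ commute past the integral. Integrating by parts in $t$ via $\partial_t e^{t\Delta}=\Delta e^{t\Delta}$, we get for every $m\ge0$ and $x\in O$
\[
\sum_{k=1}^N \frac{\Gamma(m+1+\alpha_k)}{\Gamma(-\alpha_k)}\int_0^\infty \big(e^{t\Delta_{\H^n}}v_k\big)(x)\,\frac{dt}{t^{1+\alpha_k+m}}\cdot(\text{sign}) = 0 .
\]
Equivalently, $\sum_k c_k(m)\,M_k(-\alpha_k-m)=0$, where $M_k(z)=\int_0^\infty e^{t\Delta}v_k(x)\,t^{z-1}dt$. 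Each $M_k$ is entire in $z$ thanks to the $t\to0$ and $t\to\infty$ decay. The boundary terms vanish for the same reason, and the decay assumption on all derivatives $\nabla^\beta v_k$ is what allows $\Delta^m v_k$ to inherit the same bounds. We then aim to show $M_k(-\alpha_k-m)$ is controlled by $m$-independent growth: $|M_k(-\alpha_k-m)|\lesssim C^m\Gamma(\cdots)$ from Gaussian off-diagonal decay, since only $t\lesssim 1$ matters, where $e^{t\Delta}v_k(x)\lesssim e^{-\delta^2/(4t)}$. The coefficients $\Gamma(m+1+\alpha_k)/\Gamma(-\alpha_k)$ have distinct growth $m^{\alpha_k}$ times $m!$. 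Dividing by $\Gamma(m+1+\alpha_N)$, a Carlson-type theorem or ratio argument in $m$ is then used to peel off the terms one at a time. Hypothesis (H), that the $\alpha_k$ are distinct mod $\Z$, is used exactly here to separate them, yielding $\int_0^\infty e^{t\Delta}v_k(x)\,t^{-1-\alpha_k-m}dt=0$ for all $m$ and each $k$.

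The third step converts these vanishing moments into unique continuation. Substituting $s=1/t$, the vanishing of all moments of $s\mapsto e^{\Delta/s}v_k(x)\,s^{\alpha_k-1}$, a function with Gaussian-type decay $e^{-\delta^2 s/4}$, forces it to vanish by a Müntz/analytic (Laplace-transform) uniqueness argument. So $e^{t\Delta}v_k(x)=0$ for all $t>0$ and $x\in O$. We would then conclude $v_k\equiv0$ either from the analyticity of $t\mapsto e^{t\Delta}v_k$ for $\mathrm{Re}\,t>0$ together with a heat-equation unique continuation argument, or more directly by noting that $e^{t\Delta}v_k|_O=0$ for all $t$ gives $\langle v_k,e^{t\Delta}\phi\rangle=0$ for $\phi\in C_c^\infty(O)$. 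Density of spans of $\{e^{t\Delta}\phi\}$ follows from spectral theory: their closure is invariant and contains functions whose $\Delta$-powers agree on $O$, and then we appeal to UCP for the (local) heat equation, valid on real-analytic $\H^n$.

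The main obstacle is the second step. It requires sharp, $m$-uniform bounds on $\int_0^\infty e^{t\Delta}v_k(x)\,t^{-1-\alpha_k-m}dt$ using \eqref{eq: global bounds heat kernel}, including the exponential volume growth, which is where $\gamma>\frac{n-1}{2}$ enters. It also requires making the asymptotic separation of the $N$ terms rigorous.
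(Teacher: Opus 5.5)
The heat-semigroup formula on $O$ is the same as in the paper. So is the $m$-fold integration by parts in $t$, for each fixed $m$, with boundary terms killed by $e^{-\kappa^2/(4t)}$ and $e^{-(n-1)^2t/4}$. The closing appeal to parabolic unique continuation for $U_k(x,t)=(e^{t\Delta_{\H^n}}v_k)(x)$ also matches. The gap is the decoupling step, which is the real content of the lemma; the paper imports it as \cite[Proposition 3.1]{FKU24}. What is missing there is not $m$-uniform bounds.

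Separating the terms by the growth $m!\,m^{\alpha_k}$ of $\Gamma(m+1+\alpha_k)$ cannot work when you have only upper bounds $C^m\Gamma(m+\cdots)$ on the moments, coming from $t\lesssim 1$. The sequences $I_k(m)=\int_0^\infty f_k(t)\,t^{-m}\,dt$ are unknown, may oscillate, and may differ between different $k$ by factors $C^m$ that swamp $m^{\alpha_k-\alpha_j}$. So no term can be shown to dominate. In fact no argument that sees only small $t$ can succeed. Take $N=2$, a nonzero $0\le f_1\in C_c^\infty((0,\infty))$, and
\[
f_2(\tau):=-\frac{1}{\Gamma(\alpha_2-\alpha_1)}\int_0^1 u^{1+\alpha_1}(1-u)^{\alpha_2-\alpha_1-1}f_1(u\tau)\,du .
\]
A Beta-integral computation gives
$\int_0^\infty f_2(\tau)\tau^{-m}\,d\tau=-\frac{\Gamma(m+1+\alpha_1)}{\Gamma(m+1+\alpha_2)}\int_0^\infty f_1(t)t^{-m}\,dt$.
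Hence $\sum_{k=1}^2\Gamma(m+1+\alpha_k)\int_0^\infty f_k(t)t^{-m}\,dt=0$ for all $m\ge0$. Both $f_k$ vanish near $t=0$ and have moments $O(C^m)$, yet neither is zero. The only hypothesis of \cite[Proposition 3.1]{FKU24} that fails is large-time decay: $f_2(\tau)\sim -c\,\tau^{-2-\alpha_1}$ as $\tau\to\infty$.

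So the exponential decay of $U_k(t)$ as $t\to\infty$ must be used in the separation. It comes from the spectral gap $\frac{(n-1)^2}{4}>0$; you note it and then discard it (``only $t\lesssim 1$ matters''). Assumption (H) must also enter through analytic structure rather than growth rates. One way to do this:
\begin{enumerate}[(a)]
\item Set $A_k(z):=\int_0^\infty U_k(t)\,t^{-z-1-\alpha_k}\,dt$, which is entire, and $c_k:=1/(\Gamma(-\alpha_k)\Gamma(1+\alpha_k))$.
\item Apply Carlson's theorem to $\Gamma(z+1)^{-2}\sum_k c_k\Gamma(z+1+\alpha_k)A_k(z)$. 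This function has exponential type on $\mathrm{Re}\,z\ge0$ and type $\pi/2$ on $i\R$. Carlson shows the \emph{sum} vanishes identically as a meromorphic function of $z$, but it does not separate the terms.
\item The separation comes from the residues at $z=-1-\alpha_k-j$, $j\ge0$. These poles are disjoint for different $k$ because $\alpha_k-\alpha_j\notin\Z$. They give $\int_0^\infty U_k(t)\,t^j\,dt=0$ for all $j\ge0$.
\item The Laplace transform $G_k(\tau):=\int_0^\infty e^{-t\tau}U_k(t)\,dt$ is holomorphic near $\tau=0$ by the large-time decay. All its derivatives at $0$ vanish, so $U_k\equiv0$.
\end{enumerate}
Alternatively, write $\Gamma(m+1+\alpha_k)t^{-m}=t^{1+\alpha_k}\int_0^\infty e^{-t\tau}\tau^{m+\alpha_k}\,d\tau$. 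After a moment-determinacy argument this reduces to $\sum_k c_k\tau^{\alpha_k}G_k(\tau)=0$ on $(0,\infty)$. The monodromy of $\tau^{\alpha_k}$ around $\tau=0$ then separates the terms.

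A minor point: your identity after integration by parts is missing the $m$-independent factor $1/\Gamma(1+\alpha_k)$. This is harmless.
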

	
	\begin{remark}
		Here $\nabla^\beta$ denotes iterated covariant derivatives with respect to the Levi--Civita connection of $\H^n$.
		In local coordinates, one can write
		\[
		\nabla^\beta
		=
		\sum_{|\alpha|\le |\beta|}
		a_{\alpha}(x)\,\partial^\alpha,
		\]
		where the coefficients $a_{\alpha}\in C^\infty(\H^n)$ depend on the metric and its Christoffel symbols.
		Since $\H^n$ has bounded geometry, these coefficients are uniformly controlled; in particular, pointwise decay for coordinate derivatives implies the same decay for covariant derivatives.
		Moreover, since $\gamma > \frac{n-1}{2}$, the decay \eqref{eq: exponential decay condition at infinity} implies $\nabla^\beta v_k\in L^2(\H^n)$ for all $\beta$.
	\end{remark}
	
	Using Lemma~\ref{lem: ent smooth}, we now prove Theorem~\ref{thm: ent}.
	
	\begin{proof}[Proof of Theorem~\ref{thm: ent}]
		For each $k=1,\dots,N$, write
		\[
		s_k=m_k+\alpha_k,
		\qquad
		m_k:=\lfloor s_k\rfloor\in \mathbb N\cup\{0\},
		\quad
		\alpha_k\in(0,1).
		\]
		By Assumption~\ref{exponent condition}, $\alpha_k\neq \alpha_j$ for $j\neq k$. Reorder indices if necessary so that
		\[
		0<\alpha_1<\alpha_2<\cdots<\alpha_N<1.
		\]
		
		\medskip
		
		{\it Step 1: mollification and pointwise exponential decay.}
		Since \(u_k \in H^{-r}(\H^n)\) need not be smooth,
		we regularize by radial convolution. Choose a nonempty open set \(O_0 \Subset O\) and \(\delta > 0\) such that
		\[
		d_{\H^n}(O_0,\H^n\setminus O)\ge \delta.
		\]
		Pick \(\eta\in C_c^\infty([0,\infty))\) with \(\eta\ge 0\), \(\supp \eta \subset [0,1)\), and \(\eta\not\equiv 0\). For \(\varepsilon\in (0,\delta)\) define the radial mollifier
		by
		\[
		\varphi_\varepsilon(d_{\H^n}(x,y))
		:=
		\frac{1}{A_\varepsilon}\eta\big(\varepsilon^{-1}d_{\H^n}(x,y)\big),
		\qquad
		A_\varepsilon:=\int_{\H^n}\eta\big(\varepsilon^{-1}d_{\H^n}(x,y)\big)\,dV(y).
		\]
		By homogeneity of \(\H^n\), the normalization constant \(A_\varepsilon\) is independent of the center point \(x\) and
		depends only on \(\varepsilon\). In particular,
		\[
		\varphi_\varepsilon(d_{\H^n}(x,\cdot)) \in C_c^\infty(\H^n), \qquad
		\supp(\varphi_\varepsilon(d_{\H^n}(x,\cdot))) \subset B_\varepsilon(x), \qquad
		\|\varphi_\varepsilon(d_{\H^n}(x,\cdot))\|_{L^1(\H^n)}=1.
		\]
		Moreover, for each multi-index \(\beta\), the quantity
		\begin{equation}\label{eq:mollifier_uniform_bound}
			\big\|\nabla_x^\beta(\varphi_\varepsilon(d_{\H^n}(x,\cdot)))\big\|_{H^r(\H^n)}
		\end{equation}
		is bounded uniformly in \(x\) (by invariance under isometries).
		
		Define, for each \(k=1,\dots,N\),
		\[
		u_{k,\varepsilon}(x):=(u_k\ast \varphi_\varepsilon)(x):=
		\big\langle u_k,\varphi_\varepsilon(d_{\H^n}(x,\cdot))\big\rangle,
		\]
		then \(u_{k,\varepsilon}\in C^\infty(\H^n)\). For any multi-index \(\beta\), differentiation under the duality pairing gives
		\[
		\nabla^\beta u_{k,\varepsilon}(x)
		=
		\big\langle u_k,\nabla_x^\beta(\varphi_\varepsilon(d_{\H^n}(x,\cdot)))\big\rangle.
		\]
		Using \eqref{eq: exponential decay condition weakly} with
		\(\phi=\nabla_x^\beta(\varphi_\varepsilon(d_{\H^n}(x,\cdot)))\), we obtain
		\[
		|\nabla^\beta u_{k,\varepsilon}(x)|
		\le C\,
		\big\|\rho_\gamma \nabla_x^\beta(\varphi_\varepsilon(d_{\H^n}(x,\cdot)))\big\|_{H^r(\H^n)}.
		\]
		Since \(\supp(\varphi_\varepsilon(d_{\H^n}(x,\cdot)))\subset B_\varepsilon(x)\), we have
		\[
		\big|\sqrt{1+d_{\H^n}(e_0,y)^2}-\sqrt{1+d_{\H^n}(e_0,x)^2}\big|
		\le d_{\H^n}(x,y)\le \varepsilon
		\]
		for \(y\in \supp(\varphi_\varepsilon(d_{\H^n}(x,\cdot)))\), because the map
		\[
		z\mapsto \sqrt{1+d_{\H^n}(e_0,z)^2}
		\]
		is \(1\)-Lipschitz on \(\H^n\). Hence
		\[
		\rho_\gamma(y)\le e^{\gamma\varepsilon}\rho_\gamma(x)
		\qquad \text{for all }y\in \supp(\varphi_\varepsilon(d_{\H^n}(x,\cdot))).
		\]
		Therefore, using \eqref{eq:mollifier_uniform_bound}, there exists \(C_{\beta,\varepsilon}>0\) such that
		\begin{equation}\label{eq:ukeps_decay}
			|\nabla^\beta u_{k,\varepsilon}(x)|
			\le C_{\beta,\varepsilon}\rho_\gamma(x),
			\qquad x\in \H^n,\ \ k=1,\dots,N.
		\end{equation}
		In particular, since \(\gamma>\frac{n-1}{2}\) and the hyperbolic volume growth is of order \(e^{(n-1)r}\),
		we have \(\nabla^\beta u_{k,\varepsilon}\in L^2(\H^n)\) for each fixed \(\beta\).

		\medskip
		
		{\it Step 2: the equation is preserved on $\mathcal{O}'$.}
		Since $\varphi_\varepsilon$ is radial, convolution with $\varphi_\varepsilon$ commutes with functional calculus of $-\Delta_{\H^n}$; in particular,
		\[
		(-\Delta_{\H^n})^{s}(u_k*\varphi_\varepsilon)
		=\big((-\Delta_{\H^n})^{s}u_k\big)*\varphi_\varepsilon
		\quad\text{in }\mathcal D'(\H^n).
		\]
		Since $u_k|_{\mathcal O}=0$ in the distributional sense and $0<\varepsilon<\delta$, we have $u_{k,\varepsilon}|_{\mathcal{O}'}=0$ (because $B_\varepsilon(x)\subset\mathcal O$ for $x\in\mathcal O'$).
		Convolving $\sum_{k=1}^N b_k(-\Delta_{\H^n})^{s_k}u_k=0$ in $\mathcal{O}$ with $\varphi_\varepsilon$ yields
		\[
		\sum_{k=1}^N b_k(-\Delta_{\H^n})^{s_k}u_{k,\varepsilon}=0
		\qquad\text{in }\mathcal{O}'.
		\]
		Hence,
		\[
		u_{1,\varepsilon}|_{\mathcal{O}'}=\cdots=u_{N,\varepsilon}|_{\mathcal{O}'}=0,
		\qquad
		\bigg(\sum_{k=1}^N b_k(-\Delta_{\H^n})^{s_k}u_{k,\varepsilon}\bigg)\Big|_{\mathcal{O}'}=0.
		\]
		
		\medskip
		
		{\it Step 3: reduction to exponents in $(0,1)$ and application of Lemma~\ref{lem: ent smooth}.}
		Define
		\[
		v_{k,\varepsilon}:=b_k(-\Delta_{\H^n})^{m_k}u_{k,\varepsilon}\in C^\infty(\H^n),
		\qquad k=1,\dots,N.
		\]
		Then on $\mathcal O'$,
		\[
		v_{1,\varepsilon}|_{\mathcal O'}=\cdots=v_{N,\varepsilon}|_{\mathcal O'}=0,
		\qquad
		\sum_{k=1}^N (-\Delta_{\H^n})^{\alpha_k}v_{k,\varepsilon}\Big|_{\mathcal O'}=0,
		\]
		since $(-\Delta_{\H^n})^{s_k}=(-\Delta_{\H^n})^{\alpha_k}(-\Delta_{\H^n})^{m_k}$.
		Moreover, $(-\Delta_{\H^n})^{m_k}$ is a local differential operator of order $2m_k$, so $\nabla^\beta v_{k,\varepsilon}$ is a finite linear combination of covariant derivatives of $u_{k,\varepsilon}$.
		Thus, \eqref{eq:ukeps_decay} implies that for every $\beta$ there exists $C'_{\beta,\varepsilon}$ such that
		\[
		|\nabla^\beta v_{k,\varepsilon}(x)|
		\le C'_{\beta,\varepsilon}\,\rho_\gamma(x),
		\qquad x\in\H^n.
		\]
		Therefore Lemma~\ref{lem: ent smooth} applies (with $\mathcal O'$ in place of $\mathcal O$), and we conclude
		\[
		v_{k,\varepsilon}\equiv 0 \quad\text{in }\H^n \quad\text{for each }k=1,\dots,N.
		\]
		Equivalently,
		\[
		(-\Delta_{\H^n})^{m_k}u_{k,\varepsilon}\equiv 0
		\quad\text{in }\H^n,\quad k=1,\dots,N.
		\]
		
		\medskip
		
		{\it Step 4: elliptic unique continuation.}
		Since $(-\Delta_{\H^n})^{m_k} u_{k,\varepsilon}=0$ in $\H^n$ and $u_{k,\varepsilon}$ vanishes on a nonempty open set $\mathcal{O}'$, standard elliptic unique continuation for $\Delta_{\H^n}$ implies $u_{k,\varepsilon}\equiv 0$ in $\H^n$.
		
		\medskip
		
		{\it Step 5: letting $\varepsilon\to 0$.}
		Since $\{\varphi_\varepsilon\}_{\varepsilon>0}$ is an approximate identity on $\H^n$ and $u_k\in H^{-r}(\H^n)$, we have
		$u_{k,\varepsilon}\to u_k$ in $H^{-r}(\H^n)$ as $\varepsilon\to 0$.
		Because $u_{k,\varepsilon}\equiv 0$ for all sufficiently small $\varepsilon$, it follows that $u_k\equiv 0$ in $H^{-r}(\H^n)$, hence $u_k\equiv 0$ in $\H^n$ for each $k=1,\dots,N$.
	\end{proof}
	
	It remains to prove Lemma~\ref{lem: ent smooth}. We first recall the following decoupling criterion investigated in \cite{FKU24}.
	
	\begin{proposition}[\text{\cite[Proposition 3.1]{FKU24}}]\label{prop: key of entangle}
		Let $N\in \N$ and $0<\alpha_1 <\ldots < \alpha_N<1$ satisfy Assumption~\ref{exponent condition}.
		Suppose $\{f_k\}_{k=1}^N\subset C^\infty((0,\infty))$ and there exist $c,\delta>0$ such that for each $k$
		\begin{equation}\label{key exponential decay condition}
			|f_k(t)|\le ce^{-\delta t}\quad (t\in (1,\infty)),
			\qquad
			|f_k(t)|\le ce^{-\delta/t}\quad (t\in (0,1]).
		\end{equation}
		If there exists $\ell\in \N\cup\{0\}$ such that
		\begin{equation}\label{FKU key id}
			\sum_{k=1}^N \Gamma(m+1+\alpha_k)\int_0^\infty f_k(t)\, t^{-m}\, dt =0
			\quad\text{for all }m=\ell,\ell+1,\ell+2,\dots,
		\end{equation}
		then $f_k(t)\equiv 0$ for $t\in (0,\infty)$ for all $k=1,\ldots,N$.
	\end{proposition}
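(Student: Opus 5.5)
The plan is to pass from the discrete identities \eqref{FKU key id} to an identity between entire functions using the Mellin transform. Then the distinct pole structure of the Gamma factors, which is guaranteed by Assumption~\ref{exponent condition}, separates the $N$ terms. Finally, each $f_k$ is recovered from a vanishing moment sequence.

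\emph{Step 1 (entire Mellin transforms and a Carlson argument).} For each $k$, set $g_k(z):=\int_0^\infty f_k(t)\,t^{-z}\,dt$. The decay \eqref{key exponential decay condition} at both $t\to 0^+$ and $t\to\infty$ makes $g_k$ an entire function of $z$. Splitting the integral at $t=1$ and substituting $t\mapsto 1/t$ on $(0,1]$ gives, for $x=\operatorname{Re}z\ge 2$,
\[
|g_k(z)|\lesssim 1+\delta^{1-x}\Gamma(x-1),
\]
uniformly in $\operatorname{Im}z$. Define
\[
F(z):=\sum_{k=1}^N\Gamma(z+1+\alpha_k)\,g_k(z).
\]
This is meromorphic, and holomorphic on $\operatorname{Re}z>-1-\alpha_1$. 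By \eqref{FKU key id}, $F(m)=0$ for all integers $m\ge\ell$. To apply Carlson's theorem, I normalize: set
\[
G(z):=\frac{\delta^{z}F(z)}{\Gamma(z+1+\alpha_N)\,\Gamma(z-1)}\qquad\text{on }\operatorname{Re}z\ge \ell+2 .
\]
Stirling's formula $|\Gamma(x+iy)|\simeq |z|^{x-1/2}e^{-x}e^{-\pi|y|/2}$ makes the ratios $\Gamma(z+1+\alpha_k)/\Gamma(z+1+\alpha_N)$ at most polynomially large. Moreover, $\delta^{x}g_k(z)/\Gamma(z-1)$ is bounded by a polynomial times $e^{\pi|y|/2}$. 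Hence $G$ has exponential type at most $\pi/2<\pi$ along vertical lines and is of exponential type overall in the half-plane. Since $G$ vanishes at all large integers, Carlson's theorem gives $G\equiv 0$. By analytic continuation, $F\equiv 0$ as a meromorphic function on $\C$. This growth bookkeeping is the step I expect to be the main obstacle: one must check that the Gamma decay in $|y|$ in the numerator truly compensates for the division, so that the type stays strictly below $\pi$.

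\emph{Step 2 (decoupling via poles).} Each $g_k$ is entire, so the only possible singularities of $\Gamma(z+1+\alpha_k)g_k(z)$ are at $z=-1-\alpha_k-j$, $j\in\N\cup\{0\}$. Since $0<\alpha_1<\cdots<\alpha_N<1$ are distinct, these pole sets are pairwise disjoint. Taking the residue of $F\equiv0$ at $z=-1-\alpha_k-j$ therefore isolates the $k$-th term and yields $g_k(-1-\alpha_k-j)=0$ for all $j\ge0$. In other words,
\[
\int_0^\infty t^{j}\,h_k(t)\,dt=0\quad\text{for all } j\ge 0,\qquad h_k(t):=t^{1+\alpha_k}f_k(t).
\]

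\emph{Step 3 (moment determinacy).} By \eqref{key exponential decay condition}, $|h_k(t)|\lesssim e^{-\delta t}$ for large $t$, and $h_k$ is bounded near $0$. Hence its Laplace transform $L_k(\zeta):=\int_0^\infty h_k(t)e^{-\zeta t}\,dt$ is holomorphic on $\operatorname{Re}\zeta>-\delta$. Its Taylor coefficients at $\zeta=0$ are $(-1)^j/j!$ times the moments above, so all of them vanish. Thus $L_k\equiv0$ near $0$, hence on $\operatorname{Re}\zeta>-\delta$; in particular $\int_0^\infty h_k(t)e^{-\mathsf{i}\xi t}\,dt=0$ for all $\xi\in\R$. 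Fourier uniqueness then gives $h_k\equiv0$, so $f_k\equiv0$ on $(0,\infty)$ for every $k$.
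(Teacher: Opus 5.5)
Your argument is correct. The paper gives no proof of this proposition; it quotes it from \cite{FKU24}. So there is no in-paper argument to compare against, and your proposal is a self-contained proof of the imported result.

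The three steps fit together cleanly:
\begin{enumerate}[(1)]
\item Carlson's theorem upgrades the identities at the integers $m\ge\ell$ to the identity $F\equiv 0$ of meromorphic functions on $\C$.
\item The pole lattices $-1-\alpha_k-(\N\cup\{0\})$ are pairwise disjoint, so taking residues isolates each term and gives $\int_0^\infty t^{j}\,t^{1+\alpha_k}f_k(t)\,dt=0$ for all $j\ge 0$.
\item Holomorphy of the Laplace transform on $\operatorname{Re}\zeta>-\delta$ then settles the moment problem.
\end{enumerate}
Written this way, one sees exactly where each hypothesis enters. The bound $e^{-\delta/t}$ near $t=0$ makes $g_k$ entire and supplies the estimate $\delta^{1-x}\Gamma(x-1)$ that feeds Carlson. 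The bound $e^{-\delta t}$ at infinity drives Step 3. The nonresonance condition is used only in Step 2, and for $\alpha_k\in(0,1)$ it reduces to mere distinctness. The genuine content of Assumption~\ref{exponent condition} therefore lies in the reduction $s_k=m_k+\alpha_k$ in the proof of Theorem~\ref{thm: ent}, not in this proposition.

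Two small remarks on Step 1.

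First, the form of Stirling's formula you quote is not a uniform two-sided estimate. The exact factor is $e^{-y\arg z}$, which is comparable to $e^{-\pi|y|/2}$ only when $|y|\gg x$. What you actually need is the one-sided bound $\Gamma(x-1)\le C(1+|y|)\,e^{\pi|y|/2}\,|\Gamma(z-1)|$ for $x\ge 2$. This does hold, for example from $|\Gamma(\sigma)/\Gamma(\sigma+\mathsf{i}y)|^2=\prod_{n\ge 0}\bigl(1+y^2/(\sigma+n)^2\bigr)$ together with $\int_0^\infty \log(1+y^2/u^2)\,du=\pi|y|$. Likewise, $|\Gamma(z+1+\alpha_k)/\Gamma(z+1+\alpha_N)|\lesssim |z|^{\alpha_k-\alpha_N}$ holds uniformly on $\operatorname{Re}z\ge 2$. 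So your type bound $\pi/2<\pi$ is justified.

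Second, the factor $\delta^{z}$ in $G$ is harmless but unnecessary. The factor $\delta^{-x}$ is constant on vertical lines, and Carlson's theorem only requires finite exponential type in the half-plane.
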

	
	With Proposition \ref{prop: key of entangle} at hand, we can prove Lemma \ref{lem: ent smooth}.
	
	\begin{proof}[Proof of Lemma~\ref{lem: ent smooth}]
		We first note that \eqref{condition_entanglement_w_reduced} implies, for every $m\in\N$,
		\begin{equation}\label{eq: initial identity}
			\begin{split}
				\Delta_{\H^n}^{m}v_1=\cdots=\Delta_{\H^n}^{m}v_N=0
				\quad\text{and}\quad
				\sum_{k=1}^N (-\Delta_{\H^n})^{\alpha_k}(-\Delta_{\H^n})^{m} v_k=0
				\quad\text{in }\mathcal O.
			\end{split}
		\end{equation}
		Fix a nonempty bounded open set $\omega\Subset \mathcal O$ and choose $\kappa\in(0,1)$ such that
		\begin{equation}\label{eq: distance kappa}
			d_{\H^n}(\overline{\omega},\H^n\setminus \mathcal O)\ge \kappa.
		\end{equation}
		
		Using the heat semigroup representation \eqref{eq: fractional via heat}, \eqref{eq: initial identity} yields, for \(x\in\omega\) and every \(m\in\N\),
		\begin{equation}\label{pf of UCP 1}
			\sum_{k=1}^N \frac{1}{\Gamma(-\alpha_k)} \int_0^\infty
			\big(e^{t\Delta_{\H^n}}\Delta_{\H^n}^{m} v_k\big)(x)\,
			\frac{dt}{t^{1+\alpha_k}}=0.
		\end{equation}
		As in \cite[Lemma 3.7]{FKU24} and \cite[Proposition 3.6]{FL24}, this identity can be rewritten as
		\begin{equation}\label{eq_important_m}
			\sum_{k=1}^N \frac{\Gamma(1+m+\alpha_k)}{\Gamma(-\alpha_k)\Gamma(1+\alpha_k)}
			\int_0^\infty (e^{t\Delta_{\H^n}}v_k)(x)\, t^{-(1+m+\alpha_k)}\,dt =0
			\quad \text{for all }x\in\omega,\ \ m\in \N.
		\end{equation}
		For completeness, we sketch the integration-by-parts argument.
		Since \(e^{t\Delta_{\H^n}}\Delta_{\H^n}^{m}=\Delta_{\H^n}^{m}e^{t\Delta_{\H^n}}\) on \(H^{2m}(\H^n)\), we have
		\begin{equation}\label{eq_100_4}
			\big(e^{t\Delta_{\H^n}}\Delta_{\H^n}^{m} v_k\big)(x)=\partial_t^{m}\big(e^{t\Delta_{\H^n}}v_k\big)(x),
			\qquad x\in\omega,\ \ t>0.
		\end{equation}
		Thus \eqref{pf of UCP 1} becomes
		\begin{equation}\label{eq_100_5}
			\sum_{k=1}^N \frac{1}{\Gamma(-\alpha_k)}\int_0^\infty
			\partial_t^{m}(e^{t\Delta_{\H^n}}v_k)(x)\,
			\frac{dt}{t^{1+\alpha_k}}=0,
			\qquad x\in\omega,\ \ m\in\N.
		\end{equation}
		
		We next justify integration by parts \(m\) times. For \(0\le \ell\le m-1\), we have
		\begin{equation}\label{eq_100_6}
			\begin{split}
				\big|\partial_t^\ell (e^{t\Delta_{\H^n}}v_k)(x)\big|
				&=\big|(e^{t\Delta_{\H^n}}\Delta_{\H^n}^{\ell}v_k)(x)\big| \\
				&= \bigg|\int_{\H^n\setminus \overline{\mathcal{O}}} p_t(x,y)\, \Delta_{\H^n}^{\ell} v_k(y)\, dV(y)\bigg| \\
				& \le \bigg( \int_{\H^n\setminus \overline{\mathcal{O}}} p_t(x,y)^2 \, dV(y) \bigg)^{1/2}
				\big\| \Delta_{\H^n}^{\ell} v_k \big\|_{L^2(\H^n\setminus \overline{\mathcal{O}})} .
			\end{split}
		\end{equation}
		
		Using the global heat kernel bounds \eqref{eq: global bounds heat kernel} and the separation condition \eqref{eq: distance kappa}, one obtains
		\begin{equation}\label{eq_100_4_4}
			\int_{\{y:\ d_{\H^n}(x,y)\ge \kappa\}} p_t(x,y)^2\, dV(y)
			\le C\, t^{-n+\frac12}(1+t)^{\frac{3n-7}{2}}
			\exp\!\Big(-\frac{(n-1)^2}{2}t\Big)\exp\!\Big(-\frac{\kappa^2}{2t}\Big),
		\end{equation}
		for \(x\in\omega\) and \(t>0\).
		Moreover, by \eqref{eq: exponential decay condition at infinity} (and \(\gamma>(n-1)/2\)) we have
		\begin{equation}\label{eq_100_4_5}
			\big\|\Delta_{\H^n}^{\ell} v_k\big\|_{L^2(\H^n\setminus \overline{\mathcal{O}})}<\infty.
		\end{equation}
		Combining \eqref{eq_100_6}--\eqref{eq_100_4_5}, we obtain
		\begin{equation}\label{eq_100_7}
			\big|\partial_t^\ell (e^{t\Delta_{\H^n}}v_k)(x)\big|
			\le C\, t^{-\frac{n}{2}+\frac14}(1+t)^{\frac{3n-7}{4}}
			\exp\!\Big(-\frac{(n-1)^2}{4}t\Big)\exp\!\Big(-\frac{\kappa^2}{4t}\Big)
			\big\| \Delta_{\H^n}^{\ell} v_k \big\|_{L^2(\H^n\setminus \overline{\mathcal{O}})},
		\end{equation}
		for \(x\in\omega\), \(t>0\), and \(0\le \ell\le m-1\), where the constant \(C\) is independent of \(t\).
		
		In particular, for every \(0\le \ell\le m-1\) and every \(\beta\ge 0\),
		\[
		t^{-\beta}\,\partial_t^\ell (e^{t\Delta_{\H^n}}v_k)(x)\to 0
		\qquad \text{as } t\to 0^+ \text{ and as } t\to \infty,
		\]
		thanks to the factors \(\exp(-\kappa^2/(4t))\) and \(\exp(-\frac{(n-1)^2}{4}t)\), respectively.
		Therefore, all boundary terms arising in the integration-by-parts procedure vanish.
		
		Hence, integrating by parts \(m\) times in \eqref{eq_100_5}, we obtain
		\begin{equation}\label{eq_100_8}
			\sum_{k=1}^N \frac{c_k}{\Gamma(-\alpha_k)}\int_0^\infty  (e^{t\Delta_{\H^n}} v_k)(x)\, \frac{dt}{t^{m+1+\alpha_k}}=0,
			\qquad x\in\omega,\ \ m\in\N,
		\end{equation}
		where
		\begin{equation}\label{eq_100_9}
			c_k=(1+\alpha_k)(2+\alpha_k)\cdots (m+\alpha_k)
			=\frac{\Gamma(m+1+\alpha_k)}{\Gamma(1+\alpha_k)}.
		\end{equation}
		This is precisely \eqref{eq_important_m}.
		
		Fix $x\in\omega$ and define
		\begin{equation}\label{eq_100_10}
			f_k(t):=\frac{1}{\Gamma(-\alpha_k)\Gamma(1+\alpha_k)}(e^{t\Delta_{\H^n}} v_k)(x)\,t^{-(1+\alpha_k)}
			= -\frac{\sin(\pi\alpha_k)}{\pi}\,(e^{t\Delta_{\H^n}} v_k)(x)\,t^{-(1+\alpha_k)}.
		\end{equation}
		Then \eqref{eq_100_8}--\eqref{eq_100_10} imply
		\[
		\sum_{k=1}^N \Gamma(m+1+\alpha_k)\int_0^\infty f_k(t)\, t^{-m}\,dt=0,
		\qquad m\in\N.
		\]
		Moreover, from \eqref{eq_100_7} (with $\ell=0$), it is easy to see that $f_k$ satisfies the exponential decay condition \eqref{key exponential decay condition} for both $t\to 0^+$ and $t\to \infty$.
		Therefore Proposition~\ref{prop: key of entangle} yields $f_k\equiv 0$ on $(0,\infty)$, hence
		\[
		(e^{t\Delta_{\H^n}} v_k)(x)=0
		\quad \text{for all }(x,t)\in \omega\times (0,\infty),\ \ k=1,\ldots,N.
		\]
		
		For each $k$, the function $U_k(x,t):=(e^{t\Delta_{\H^n}} v_k)(x)$ solves the heat equation
		\[
		\begin{cases}
			(\partial_t-\Delta_{\H^n})U_k=0&\text{ in }\H^n\times (0,\infty),
			\\
			U_k(\cdot,0)=v_k &\text{ in }\H^n.
		\end{cases}
		\]
		By unique continuation for parabolic operators (see, e.g., \cite[Sections 1 and 4]{LinFH_UCP_para}), the vanishing of $U_k$ on the nontrivial open set $\omega\times(0,\infty)$ implies $U_k\equiv 0$ on $\H^n\times(0,\infty)$.
		Taking $t\to 0^+$ yields $v_k\equiv 0$ on $\H^n$ for all $k=1,\ldots,N$.
	\end{proof}

	\section{Application to inverse problems}\label{sec:IP}
	
	To prove Theorem~\ref{thm: main}, we combine the Runge approximation property with the integral identity from Lemma~\ref{lem:integral_identity_hyp}.
	We first record a simple observation: compact support in a bounded set implies the weak exponential decay condition used earlier.
	
	\begin{lemma}[Compact support implies an exponential decay condition]\label{lem:compact_support_decay}
		Let \(r\ge 0\) and let \(\Omega\subset \H^n\) be bounded. 
		For \(\gamma>\frac{n-1}{2}\), define
		\[
		\rho_\gamma(x):=e^{-\gamma \sqrt{1+d_{\H^n}(e_0,x)^2}}, \qquad x\in \H^n.
		\]
		Then \(\rho_\gamma\in C^\infty(\H^n)\), and there exists \(C>0\) such that
		\begin{equation}\label{eq:decay_from_compact_support}
			|\langle w,\phi\rangle|
			\le C\,\|\rho_\gamma \phi\|_{H^r(\H^n)}
			\qquad \text{for all }\phi\in C_c^\infty(\H^n)
		\end{equation}
		whenever \(w\in \widetilde H^r(\Omega)\).
	\end{lemma}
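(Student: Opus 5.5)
The plan is to insert a cutoff equal to \(\rho_\gamma^{-1}\) on a neighborhood of \(\overline{\Omega}\) and use that multiplication by a smooth compactly supported function is bounded on \(H^r(\H^n)\).

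\emph{Smoothness of \(\rho_\gamma\).} The function \(x\mapsto d_{\H^n}(e_0,x)^2\) is smooth on \(\H^n\), since the exponential map at \(e_0\) is a global diffeomorphism (Cartan--Hadamard). Alternatively, in the hyperboloid model \(d_{\H^n}(e_0,x)=\operatorname{arccosh}(x_0)\), and \(\operatorname{arccosh}(s)^2\) is a real-analytic function of \(s\) near \(s=1\). Hence \(\sqrt{1+d^2}\) is smooth, since \(1+d^2\ge 1\), and so \(\rho_\gamma\in C^\infty(\H^n)\) with \(\rho_\gamma>0\).

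\emph{The cutoff.} Choose \(\chi\in C_c^\infty(\H^n)\) with \(\chi\equiv 1\) on a neighborhood of \(\overline{\Omega}\), and set \(\psi:=\chi\rho_\gamma^{-1}\in C_c^\infty(\H^n)\). For \(w\in\widetilde H^r(\Omega)\) the support of \(w\) lies in \(\overline{\Omega}\). For \(w\in C_c^\infty(\Omega)\) we have \(w\psi\rho_\gamma=w\chi=w\), so
\[
\langle w,\phi\rangle=\langle w,\psi\,\rho_\gamma\phi\rangle .
\]
This identity extends to all \(w\in\widetilde H^r(\Omega)\) by density, provided the pairing is continuous, which is the last step below.

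\emph{The multiplier bound.} The key fact is that multiplication by \(\psi\in C_c^\infty(\H^n)\) is bounded on \(H^r(\H^n)\) for \(r\ge0\). Since \(\psi\) has compact support, I will reduce to the Euclidean statement with finitely many charts. On a compact set, the norm \(\|(\Id-\Delta_{\H^n})^{r/2}f\|_{L^2}\) is equivalent to the Euclidean \(H^r\) norm in local coordinates for functions supported there, and this is the main technical point. For integer \(r\) it is immediate from the Leibniz rule, because \(H^r\) is then characterized by covariant derivatives up to order \(r\) in \(L^2\), by bounded geometry and elliptic regularity. The general case \(r\ge 0\) follows by complex interpolation between integer orders; the spaces \(H^a(\H^n)\) form a complex interpolation scale by spectral calculus. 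This gives
\[
\|\psi f\|_{H^r(\H^n)}\le C_\psi\|f\|_{H^r(\H^n)}.
\]

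\emph{Conclusion.} Using the \(H^{-r}\)--\(H^r\) duality (with \(\widetilde H^r(\Omega)\subset H^r\subset H^{-r}\), or the pairing taken as the \(L^2\) pairing extended), we get
\[
|\langle w,\phi\rangle|\le \|w\|_{H^{-r}}\|\psi\rho_\gamma\phi\|_{H^r}\le C_\psi\|w\|_{H^{r}}\|\rho_\gamma\phi\|_{H^r}.
\]
Here \(\|w\|_{H^{-r}}\le\|w\|_{H^r}\) by the embedding lemma, so \(C=C_\psi\|w\|_{H^r(\H^n)}\). I expect the only real obstacle to be justifying the multiplier bound for noninteger \(r\), which I handle by interpolation.
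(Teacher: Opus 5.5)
Your proof is correct and follows essentially the same route as the paper: a cutoff equal to $1$ on $\Omega$, the compactly supported smooth multiplier (cutoff)$\cdot\rho_\gamma^{-1}$, boundedness of multiplication by it, duality, and the embedding $H^r(\H^n)\hookrightarrow H^{-r}(\H^n)$. The one difference is which side of the pairing the multiplier sits on, and the two versions are dual to each other. The paper pairs $w\in H^r$ against $\eta\phi\in H^{-r}$ and uses the multiplier bound on $H^{-r}$, which it asserts without proof. You pair $w\in H^{-r}$ against $H^r$ and use the multiplier bound on $H^r$, which you justify via integer orders and complex interpolation.
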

	
	\begin{proof}
		Since \(\Omega\) is bounded, we may choose \(\eta\in C_c^\infty(\H^n)\) such that
		\[
		\eta \equiv 1 \quad \text{on } \Omega.
		\]
		Because \(w\in \widetilde H^r(\Omega)\), we have \(\supp w\subset \overline{\Omega}\), and therefore
		\[
		\langle w,\phi\rangle=\langle w,\eta\phi\rangle
		\qquad \text{for all }\phi\in C_c^\infty(\H^n).
		\]
		Using the dual pairing between \(H^r(\H^n)\) and \(H^{-r}(\H^n)\), we obtain
		\[
		|\langle w,\phi\rangle|
		=|\langle w,\eta\phi\rangle|
		\le \|w\|_{H^r(\H^n)}\,\|\eta\phi\|_{H^{-r}(\H^n)}.
		\]
		
		Now set
		\[
		a_\gamma := \eta\,\rho_\gamma^{-1}.
		\]
		Since \(\eta\in C_c^\infty(\H^n)\) and \(\rho_\gamma\in C^\infty(\H^n)\) is strictly positive, we have
		\[
		a_\gamma \in C_c^\infty(\H^n).
		\]
		Moreover,
		\[
		\eta\phi = a_\gamma\,(\rho_\gamma\phi).
		\]
		Since multiplication by a fixed \(C_c^\infty\)-function is bounded on \(H^{-r}(\H^n)\), it follows that
		\[
		\|\eta\phi\|_{H^{-r}(\H^n)}
		=\|a_\gamma(\rho_\gamma\phi)\|_{H^{-r}(\H^n)}
		\le C\,\|\rho_\gamma\phi\|_{H^{-r}(\H^n)}.
		\]
		Using the continuous embedding \(H^r(\H^n)\hookrightarrow H^{-r}(\H^n)\), we further get
		\[
		\|\rho_\gamma\phi\|_{H^{-r}(\H^n)}
		\le C\,\|\rho_\gamma\phi\|_{H^r(\H^n)}.
		\]
		Combining the estimates above yields
		\[
		|\langle w,\phi\rangle|
		\le C\,\|w\|_{H^r(\H^n)}\,\|\rho_\gamma\phi\|_{H^r(\H^n)},
		\]
		which proves \eqref{eq:decay_from_compact_support}.
	\end{proof}
	
	Denote by $\mathcal P_{\H^n}$ the operator introduced in \eqref{eq: fractional poly op.}, and set
	\[
	\mathcal L_q:=\mathcal P_{\H^n}+q.
	\]
	We now establish a Runge approximation property for $\mathcal L_q$.
	
	\begin{theorem}[Runge approximation]\label{thm:Runge_hyp}
		Let $W\subset \Omega_e$ be a nonempty open subset. Then the set
		\[
		\mathcal R_W
		:=
		\left\{
		u|_\Omega \;:\;
		u\in H^{s_N}(\H^n),\
		\mathcal L_q u=0 \text{ in }\Omega,\
		u|_{\Omega_e}=f \text{ with } f\in C_c^\infty(W)
		\right\}
		\]
		is dense in $L^2(\Omega)$.
		Equivalently, for any $f\in L^2(\Omega)$ and any $\varepsilon>0$ there exists
		$u\in H^{s_N}(\H^n)$ such that
		\[
		\mathcal L_q u=0 \text{ in }\Omega,\qquad
		\supp(u|_{\Omega_e})\subset W,\qquad
		\|u-f\|_{L^2(\Omega)}<\varepsilon.
		\]
	\end{theorem}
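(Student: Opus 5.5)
The plan is the standard Hahn--Banach duality argument of \cite{GSU20}, with the entanglement principle (Theorem~\ref{thm: ent}, in its $N=1$ or mixed form) replacing the classical unique continuation property. Since $\mathcal R_W$ is a linear subspace of $L^2(\Omega)$, it suffices to show that any $\varphi\in L^2(\Omega)$ with $(\varphi, u|_\Omega)_{L^2(\Omega)}=0$ for all $u|_\Omega\in\mathcal R_W$ must vanish.

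\textbf{Adjoint problem.} Given such a $\varphi$, we extend it by zero, so that $\varphi\in (\wt H^{s_N}(\Omega))^*$. By Lemma~\ref{lem: well-posedness} (with $f=0$, $F=\varphi$; note that $B_q$ is symmetric, so the adjoint problem is the same problem), there is a unique $w\in\wt H^{s_N}(\Omega)$ with $B_q(w,v)=(\varphi,v)_{L^2(\Omega)}$ for all $v\in\wt H^{s_N}(\Omega)$. Now fix $f\in C_c^\infty(W)$, let $u_f$ be the solution of \eqref{eq: main}, and write $u_f=f+\tilde v$ with $\tilde v\in\wt H^{s_N}(\Omega)$. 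Then
\[
0=(\varphi,u_f)_{L^2(\Omega)}=(\varphi,\tilde v)_{L^2(\Omega)}=B_q(w,\tilde v)=B_q(\tilde v,w)=B_q(u_f,w)-B_q(f,w)=-B_q(f,w).
\]
Here we used that $f$ vanishes on $\Omega$, that $B_q$ is symmetric, and that $B_q(u_f,w)=0$ because $w\in\wt H^{s_N}(\Omega)$ and $u_f$ solves the equation weakly. Since $f$ is supported in $W\subset\Omega_e$, the potential term drops out, and arguing as in Proposition~\ref{prop:DN_characterization} we obtain
\[
B_q(f,w)=\langle \mathcal P_{\H^n}w, f\rangle = 0 \quad \text{for all } f\in C_c^\infty(W),
\]
that is, $\mathcal P_{\H^n}w=\sum_k b_k(-\Delta_{\H^n})^{s_k}w=0$ in $W$.

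\textbf{Applying the entanglement principle.} Take $u_k:=w$ for every $k$ and $O:=W$. Since $\supp w\subset\overline\Omega$ and $W\subset\Omega_e$, we have $w|_W=0$. Moreover, $w\in\wt H^{s_N}(\Omega)\subset H^{-r}(\H^n)$ with $r=0$, and Lemma~\ref{lem:compact_support_decay} (which covers $\wt H^0(\Omega)\supset \wt H^{s_N}(\Omega)$) provides the decay condition \eqref{eq: exponential decay condition weakly}. The exponents satisfy (H) and $b_k>0$, so Theorem~\ref{thm: ent} applies and gives $w\equiv0$. Consequently, $(\varphi,v)_{L^2(\Omega)}=B_q(0,v)=0$ for all $v\in C_c^\infty(\Omega)$, which forces $\varphi=0$. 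This proves density. The equivalent $\varepsilon$-formulation is immediate, since elements of $\mathcal R_W$ come from solutions $u_f$ with $u_f|_{\Omega_e}=f\in C_c^\infty(W)$.

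\textbf{Main obstacle.} The delicate point is the identification $B_q(f,w)=\langle\mathcal P_{\H^n}w,f\rangle$ together with the claim that the distribution $\mathcal P_{\H^n}w\in H^{-s_N}(\H^n)$ restricts to zero on $W$. This needs self-adjointness of $(-\Delta_{\H^n})^{s_k/2}$ via Plancherel, applied to $w\in H^{s_N}$ and $f\in C_c^\infty$, together with checking that the hypotheses of Theorem~\ref{thm: ent} are exactly met; in particular we must take $u_k=w$ for all $k$ rather than distinct functions. A secondary point is the use of the symmetry of $B_q$, which relies on $q$ being real-valued; for complex $q$ one would instead work with the adjoint operator involving $\bar q$, and the same argument goes through.
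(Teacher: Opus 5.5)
Your proposal is correct and follows essentially the same route as the paper. The paper also solves the adjoint problem $\mathcal L_q w=h$ with $w\in\wt H^{s_N}(\Omega)$, uses the symmetry of $B_q$ and the weak equation for $u_\varphi$ to get $B_q(w,\varphi)=0$ for all $\varphi\in C_c^\infty(W)$, and deduces $\mathcal P_{\H^n}w=w=0$ in $W$. It then invokes Lemma~\ref{lem:compact_support_decay} and Theorem~\ref{thm: ent} (implicitly with all $u_k=w$) to conclude $w\equiv0$ and hence $h=0$. The only cosmetic difference is that you verify the decay hypothesis with $r=0$ rather than using the full $\wt H^{s_N}$ regularity, which works equally well.
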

	
	\begin{proof}
		To prove the density of \(\mathcal R_W\) in \(L^2(\Omega)\), it suffices to show that if
		\(h\in L^2(\Omega)\) satisfies
		\begin{equation}\label{eq:Runge_orth}
			(h,u)_{L^2(\Omega)}=0
			\quad \text{for all }u\in H^{s_N}(\H^n)\text{ such that }
			\mathcal L_q u=0 \text{ in }\Omega,\ \supp(u|_{\Omega_e})\subset W,
		\end{equation}
		then \(h=0\).
		
		Let \(w\in \widetilde H^{s_N}(\Omega)\) be the unique solution of
		\[
		\begin{cases}
			\mathcal L_q w=h & \text{ in }\Omega,\\
			w=0 & \text{ in }\Omega_e,
		\end{cases}
		\]
		whose existence and uniqueness follow from Lemma~\ref{lem: well-posedness} together with
		\eqref{eq: eigenvalue condition}.
		
		Now, fix \(\varphi\in C_c^\infty(W)\), and let \(u_\varphi\in H^{s_N}(\H^n)\) be the unique solution of
		\[
		\begin{cases}
			\mathcal L_q u_\varphi=0 & \text{ in }\Omega,\\
			u_\varphi=\varphi & \text{ in }\Omega_e.
		\end{cases}
		\]
		Since \(\supp(u_\varphi|_{\Omega_e})\subset W\), the orthogonality assumption \eqref{eq:Runge_orth} yields
		\begin{equation}\label{eq:Runge_uphi_orth}
			(h,u_\varphi)_{L^2(\Omega)}=0.
		\end{equation}
		
		Moreover, since \(u_\varphi-\varphi\in \widetilde H^{s_N}(\Omega)\), the weak formulation for \(w\) gives
		\[
		B_q(w,u_\varphi-\varphi)=(h,u_\varphi-\varphi)_{L^2(\Omega)}.
		\]
		Because \(\varphi\) is supported in \(W\subset \Omega_e\), we have \(\varphi|_\Omega=0\), and hence
		\[
		(h,u_\varphi-\varphi)_{L^2(\Omega)}=(h,u_\varphi)_{L^2(\Omega)}=0
		\]
		by \eqref{eq:Runge_uphi_orth}. Therefore,
		\begin{equation}\label{eq:Runge_w_uphi_minus_phi}
			B_q(w,u_\varphi-\varphi)=0.
		\end{equation}
		
		On the other hand, since \(u_\varphi\) solves \(\mathcal L_q u_\varphi=0\) in \(\Omega\) and
		\(w\in \widetilde H^{s_N}(\Omega)\), we have
		\[
		B_q(u_\varphi,w)=0.
		\]
		By the symmetry of \(B_q\), it follows that
		\begin{equation}\label{eq:Runge_w_uphi_zero}
			B_q(w,u_\varphi)=0.
		\end{equation}
		Subtracting \eqref{eq:Runge_w_uphi_minus_phi} from \eqref{eq:Runge_w_uphi_zero}, we obtain
		\[
		B_q(w,\varphi)=0.
		\]
		
		Since \(w=0\) in \(\Omega_e\) and \(\varphi\in C_c^\infty(W)\subset C_c^\infty(\Omega_e)\), the definition of
		\(B_q\) shows that
		\[
		B_q(w,\varphi)=\int_{\Omega_e} \mathcal P_{\H^n} w\,\varphi\, dV
		=\int_W \mathcal P_{\H^n} w\,\varphi\, dV.
		\]
		Hence
		\[
		\int_W \mathcal P_{\H^n} w\,\varphi\, dV =0
		\qquad \text{for all }\varphi\in C_c^\infty(W),
		\]
		which implies that
		\[
		\mathcal P_{\H^n} w=0 \qquad \text{in }W
		\]
		in the distributional sense. Since \(w=0\) in \(\Omega_e\), we also have
		\[
		w=0 \qquad \text{in }W.
		\]
		
		Furthermore, since \(w\in \widetilde H^{s_N}(\Omega)\) and \(\Omega\) is bounded, Lemma~\ref{lem:compact_support_decay}
		shows that \(w\) satisfies the weak exponential decay condition \eqref{eq: exponential decay condition weakly}.
		Therefore, we may apply the entanglement principle, Theorem~\ref{thm: ent}, to conclude that
		\[
		w\equiv 0 \qquad \text{in }\H^n.
		\]
		In particular,
		\[
		h=\mathcal L_q w=0 \qquad \text{in }\Omega.
		\]
		
		Thus the only element of \(L^2(\Omega)\) orthogonal to \(\mathcal R_W\) is \(0\), and therefore
		\(\mathcal R_W\) is dense in \(L^2(\Omega)\).
	\end{proof}
	
	\begin{proof}[Proof of Theorem~\ref{thm: main}]
		Since the DN maps agree on the measurement sets, we have
		\begin{equation}\label{eq:DN_equal_assumption}
			\left\langle (\Lambda_{q_1}-\Lambda_{q_2}) f, g\right\rangle =0
			\quad \text{for all } f\in C_c^\infty(W_1),\ g\in C_c^\infty(W_2),
		\end{equation}
		where $W_1,W_2\subset \Omega_e$ are nonempty open sets.
		
		Fix $f\in C_c^\infty(W_1)$ and $g\in C_c^\infty(W_2)$.
		Let $u_1,u_2\in H^{s_N}(\H^n)$ be the unique solutions of
		\[
		\begin{cases}
			(\mathcal P_{\H^n}+q_1)u_1=0 &\text{in }\Omega,\\
			u_1=f &\text{in }\Omega_e,
		\end{cases}
		\qquad
		\begin{cases}
			(\mathcal P_{\H^n}+q_2)u_2=0 &\text{in }\Omega,\\
			u_2=g &\text{in }\Omega_e.
		\end{cases}
		\]
		By Lemma~\ref{lem:DNmap_hyp}, Lemma~\ref{lem:integral_identity_hyp} and \eqref{eq:DN_equal_assumption},
		\begin{equation}\label{eq:int_id_zero}
			\int_\Omega (q_1-q_2)u_1u_2 \, dV=0
			\quad \text{for all } f\in C_c^\infty(W_1),\ g\in C_c^\infty(W_2).
		\end{equation}
		
		Let $\varphi\in L^2(\Omega)$ be arbitrary. By the Runge approximation property (Theorem~\ref{thm:Runge_hyp}),
		there exist sequences $\{u_1^{(k)}\}_k,\{u_2^{(k)}\}_k\subset H^{s_N}(\H^n)$ such that
		\[
		(\mathcal P_{\H^n}+q_1)u_1^{(k)}=0 \text{ in }\Omega,\quad
		\supp(u_1^{(k)}|_{\Omega_e})\subset W_1,\quad
		u_1^{(k)}|_\Omega \to \varphi \text{ in }L^2(\Omega),
		\]
		and
		\[
		(\mathcal P_{\H^n}+q_2)u_2^{(k)}=0 \text{ in }\Omega,\quad
		\supp(u_2^{(k)}|_{\Omega_e})\subset W_2,\quad
		u_2^{(k)}|_\Omega \to 1 \text{ in }L^2(\Omega).
		\]
		Applying \eqref{eq:int_id_zero} with $u_1=u_1^{(k)}$ and $u_2=u_2^{(k)}$ gives
		\[
		\int_\Omega (q_1-q_2)u_1^{(k)}u_2^{(k)} \, dV=0
		\quad \text{for all }k.
		\]
		Since $q_1-q_2\in L^\infty(\Omega)$ and $u_1^{(k)}\to \varphi$, $u_2^{(k)}\to 1$ in $L^2(\Omega)$,
		we have $u_1^{(k)}u_2^{(k)}\to \varphi$ in $L^1(\Omega)$ (by Cauchy--Schwarz). Letting $k\to\infty$ yields
		\[
		\int_\Omega (q_1-q_2)\varphi\, dV=0
		\qquad \text{for all }\varphi\in L^2(\Omega).
		\]
		Thanks to the arbitrariness of $\varphi \in L^2(\Omega)$, we conclude that $q_1=q_2$ almost everywhere in $\Omega$.
	\end{proof}

	\section*{Statements and Declarations}
	
	\noindent\textbf{Data availability statement.} 
	No datasets were generated or analyzed during the current study.
	
	\medskip 
	
	\noindent\textbf{Conflict of Interest.}  The author declares that there are no conflicts of interest.

	\medskip

	\medskip

	\noindent\textbf{Acknowledgments.} 
	Y.-H. L. is partially supported by the National Science and Technology Council (NSTC) of Taiwan, under the project 113-2628-M-A49-003. Y.-H. L. acknowledges financial support from the Alexander von Humboldt Foundation through the Henriette Herz Scouting Programme and hosted by Universität Duisburg-Essen.

	\bibliography{refs} 

\begin{thebibliography}{CMRU22}

\bibitem[BGS15]{BGS_15}
Valeria Banica, Mar\'ia del~Mar Gonz\'alez, and Mariel S\'aez.
\newblock Some constructions for the fractional {L}aplacian on noncompact
  manifolds.
\newblock {\em Rev. Mat. Iberoam.}, 31(2):681--712, 2015.

\bibitem[CGRU23]{CGRU2023reduction}
Giovanni Covi, Tuhin Ghosh, Angkana R{\"u}land, and Gunther Uhlmann.
\newblock A reduction of the fractional {C}alder\'on problem to the local
  {C}alder\'on problem by means of the {C}affarelli-{S}ilvestre extension.
\newblock {\em arXiv preprint arXiv:2305.04227}, 2023.

\bibitem[CLL19]{CLL2017simultaneously}
Xinlin Cao, Yi-Hsuan Lin, and Hongyu Liu.
\newblock Simultaneously recovering potentials and embedded obstacles for
  anisotropic fractional {S}chr\"{o}dinger operators.
\newblock {\em Inverse Probl. Imaging}, 13(1):197--210, 2019.

\bibitem[CLR20]{cekic2020calderon}
Mihajlo Ceki\'c, Yi-Hsuan Lin, and Angkana R\"uland.
\newblock The {C}alder\'on problem for the fractional {S}chr\"odinger equation
  with drift.
\newblock {\em Calc. Var. Partial Differential Equations}, 59(3):Paper No. 91,
  46, 2020.

\bibitem[CMRU22]{CMRU20}
Giovanni Covi, Keijo M\"{o}nkk\"{o}nen, Jesse Railo, and Gunther Uhlmann.
\newblock The higher order fractional {C}alder\'{o}n problem for linear local
  operators: {U}niqueness.
\newblock {\em Adv. Math.}, 399:Paper No. 108246, 2022.

\bibitem[CS07]{CS07}
Luis Caffarelli and Luis Silvestre.
\newblock An extension problem related to the fractional {L}aplacian.
\newblock {\em Comm. Partial Differential Equations}, 32(7-9):1245--1260, 2007.

\bibitem[DM88]{DM88_heatkernel_hyper}
Edward~Brian Davies and Nikolaos Mandouvalos.
\newblock Heat kernel bounds on hyperbolic space and {K}leinian groups.
\newblock {\em Proc. London Math. Soc. (3)}, 57(1):182--208, 1988.

\bibitem[Fei24]{Fei24_TAMS}
Ali Feizmohammadi.
\newblock Fractional {C}alder\'on problem on a closed {R}iemannian manifold.
\newblock {\em Trans. Amer. Math. Soc.}, 377(4):2991--3013, 2024.

\bibitem[FGKU21]{feizmohammadi2021fractional}
Ali Feizmohammadi, Tuhin Ghosh, Katya Krupchyk, and Gunther Uhlmann.
\newblock Fractional anisotropic {C}alder\'on problem on closed {R}iemannian
  manifolds.
\newblock {\em arXiv:2112.03480}, 2021.

\bibitem[FGKU25]{FGKU_2025fractional}
Ali Feizmohammadi, Tuhin Ghosh, Katya Krupchyk, and Gunther Uhlmann.
\newblock Fractional anisotropic {C}alder\'on problem on closed {R}iemannian
  manifolds.
\newblock {\em J. Differential Geom.}, 131(2):401--414, 2025.

\bibitem[FKU24]{FKU24}
Ali Feizmohammadi, Katya Krupchyk, and Gunther Uhlmann.
\newblock Calder\'{o}n problem for fractional {S}chr\"{o}dinger operators on
  closed {R}iemannian manifolds.
\newblock {\em arXiv preprint arXiv:2407.16866}, 2024.

\bibitem[FL24]{FL24}
Ali Feizmohammadi and Yi-Hsuan Lin.
\newblock Entanglement principle for the fractional {L}aplacian with
  applications to inverse problems.
\newblock {\em arXiv preprint arXiv:2412.13118}, 2024.

\bibitem[GGG03]{GGG_book_03}
Izrail~Moiseevich Gelfand, Simon~G. Gindikin, and Mark~Iosifovich Graev.
\newblock {\em Selected topics in integral geometry}, volume 220 of {\em
  Translations of Mathematical Monographs}.
\newblock American Mathematical Society, Providence, RI, 2003.
\newblock Translated from the 2000 Russian original by A. Shtern.

\bibitem[GLX17]{GLX}
Tuhin Ghosh, Yi-Hsuan Lin, and Jingni Xiao.
\newblock The {C}alder\'{o}n problem for variable coefficients nonlocal
  elliptic operators.
\newblock {\em Comm. Partial Differential Equations}, 42(12):1923--1961, 2017.

\bibitem[GRSU20]{GRSU20}
Tuhin Ghosh, Angkana R\"{u}land, Mikko Salo, and Gunther Uhlmann.
\newblock Uniqueness and reconstruction for the fractional {C}alder\'{o}n
  problem with a single measurement.
\newblock {\em J. Funct. Anal.}, 279(1):108505, 42, 2020.

\bibitem[GSU20]{GSU20}
Tuhin Ghosh, Mikko Salo, and Gunther Uhlmann.
\newblock The {C}alder\'{o}n problem for the fractional {S}chr\"{o}dinger
  equation.
\newblock {\em Anal. PDE}, 13(2):455--475, 2020.

\bibitem[GU21]{GU2021calder}
Tuhin Ghosh and Gunther Uhlmann.
\newblock The {C}alder\'{o}n problem for nonlocal operators.
\newblock {\em arXiv:2110.09265}, 2021.

\bibitem[Hel08]{Hel_book_08}
Sigurdur Helgason.
\newblock {\em Geometric analysis on symmetric spaces}, volume~39 of {\em
  Mathematical Surveys and Monographs}.
\newblock American Mathematical Society, Providence, RI, second edition, 2008.

\bibitem[HL19]{harrach2017nonlocal-monotonicity}
Bastian Harrach and Yi-Hsuan Lin.
\newblock Monotonicity-based inversion of the fractional {S}chr\"{o}dinger
  equation {I}. {P}ositive potentials.
\newblock {\em SIAM J. Math. Anal.}, 51(4):3092--3111, 2019.

\bibitem[HL20]{harrach2020monotonicity}
Bastian Harrach and Yi-Hsuan Lin.
\newblock Monotonicity-based inversion of the fractional {S}ch\"{o}dinger
  equation {II}. {G}eneral potentials and stability.
\newblock {\em SIAM J. Math. Anal.}, 52(1):402--436, 2020.

\bibitem[KLW22]{KLW2021calder}
Pu-Zhao Kow, Yi-Hsuan Lin, and Jenn-Nan Wang.
\newblock The {C}alder\'{o}n problem for the fractional wave equation:
  uniqueness and optimal stability.
\newblock {\em SIAM J. Math. Anal.}, 54(3):3379--3419, 2022.

\bibitem[Lin90]{LinFH_UCP_para}
Fang-Hua Lin.
\newblock A uniqueness theorem for parabolic equations.
\newblock {\em Comm. Pure Appl. Math.}, 43(1):127--136, 1990.

\bibitem[Lin22]{lin2020monotonicity}
Yi-Hsuan Lin.
\newblock Monotonicity-based inversion of fractional semilinear elliptic
  equations with power type nonlinearities.
\newblock {\em Calc. Var. Partial Differential Equations}, 61(5):Paper No. 188,
  30, 2022.

\bibitem[Lin26]{lin2024fractional}
Yi-Hsuan Lin.
\newblock The fractional anisotropic {C}alder\'on problem for a nonlocal
  parabolic equation on closed {R}iemannian manifolds.
\newblock {\em Commun. Anal. Comput.}, 7:60--75, 2026.

\bibitem[LL22]{LL2020inverse}
Ru-Yu Lai and Yi-Hsuan Lin.
\newblock Inverse problems for fractional semilinear elliptic equations.
\newblock {\em Nonlinear Anal.}, 216:Paper No. 112699, 21, 2022.

\bibitem[LL23]{LL2022inverse}
Yi-Hsuan Lin and Hongyu Liu.
\newblock Inverse problems for fractional equations with a minimal number of
  measurements.
\newblock {\em Commun. Anal. Comput.}, 1(1):72--93, 2023.

\bibitem[LL25]{LL25_Integro}
Yi-Hsuan Lin and Hongyu Liu.
\newblock {\em Inverse {P}roblems for {I}ntegro-differential {O}perators},
  volume 222 of {\em Applied Mathematical Sciences}.
\newblock Springer, Cham, 2025.

\bibitem[LLR20]{LLR2019calder}
Ru-Yu Lai, Yi-Hsuan Lin, and Angkana R\"{u}land.
\newblock The {C}alder\'{o}n problem for a space-time fractional parabolic
  equation.
\newblock {\em SIAM J. Math. Anal.}, 52(3):2655--2688, 2020.

\bibitem[LLU22]{LLU2022para}
Ching-Lung Lin, Yi-Hsuan Lin, and Gunther Uhlmann.
\newblock The {C}alder\'{o}n problem for nonlocal parabolic operators.
\newblock {\em arXiv preprint arXiv:2209.11157}, 2022.

\bibitem[LLU23]{LLU2023calder}
Ching-Lung Lin, Yi-Hsuan Lin, and Gunther Uhlmann.
\newblock The {C}alder\'{o}n problem for nonlocal parabolic operators: {A} new
  reduction from the nonlocal to the local.
\newblock {\em arXiv preprint arXiv:2308.09654}, 2023.

\bibitem[LLY25]{LLY25_entangle}
Ru-Yu Lai, Yi-Hsuan Lin, and Lili Yan.
\newblock Entanglement principle and fractional {C}alder\'on problem for
  nonlocal parabolic operators.
\newblock {\em arXiv preprint arXiv:2510.18641}, 2025.

\bibitem[LZ23]{LZ2023unique}
Yi-Hsuan Lin and Philipp Zimmermann.
\newblock Unique determination of coefficients and kernel in nonlocal porous
  medium equations with absorption term.
\newblock {\em arXiv preprint arXiv:2305.16282}, 2023.

\bibitem[LZ24]{LZ2024approximation}
Yi-Hsuan Lin and Philipp Zimmermann.
\newblock Approximation and uniqueness results for the nonlocal diffuse optical
  tomography problem.
\newblock {\em arXiv preprint arXiv:2406.06226}, 2024.

\bibitem[McL00]{ML-strongly-elliptic-systems}
William McLean.
\newblock {\em Strongly elliptic systems and boundary integral equations}.
\newblock Cambridge University Press, Cambridge, 2000.

\bibitem[RS18]{ruland2018exponential}
Angkana R\"{u}land and Mikko Salo.
\newblock Exponential instability in the fractional {C}alder\'{o}n problem.
\newblock {\em Inverse Problems}, 34(4):045003, 21, 2018.

\bibitem[RS20]{RS20}
Angkana R\"{u}land and Mikko Salo.
\newblock The fractional {C}alder\'{o}n problem: low regularity and stability.
\newblock {\em Nonlinear Anal.}, 193:111529, 56, 2020.

\bibitem[R{\"u}l15]{ruland2015unique}
Angkana R{\"u}land.
\newblock Unique continuation for fractional {S}chr\"odinger equations with
  rough potentials.
\newblock {\em Comm. Partial Differential Equations}, 40(1):77--114, 2015.

\bibitem[R{\"u}l23]{ruland2023revisiting}
Angkana R{\"u}land.
\newblock Revisiting the anisotropic fractional {C}alder\'on problem using the
  {C}affarelli-{S}ilvestre extension.
\newblock {\em arXiv preprint arXiv:2309.00858}, 2023.

\bibitem[Tat01]{Tataru_strichartz}
Daniel Tataru.
\newblock Strichartz estimates in the hyperbolic space and global existence for
  the semilinear wave equation.
\newblock {\em Trans. Amer. Math. Soc.}, 353(2):795--807, 2001.

\end{thebibliography}
	
	\bibliographystyle{alpha}

\end{document}